\documentclass[11pt,reqno]{amsart}

\newcommand{\version}{June 28, 2026}

\usepackage{amsmath,amssymb,amsthm}
\usepackage[margin=1.1in]{geometry}
\usepackage{hyperref}
\usepackage{graphicx}

\theoremstyle{plain}
\newtheorem{theorem}{Theorem}[section]
\newtheorem{proposition}[theorem]{Proposition}
\newtheorem{lemma}[theorem]{Lemma}

\theoremstyle{definition}

\theoremstyle{remark}
\newtheorem{remark}[theorem]{Remark}

\newcommand{\R}{\mathbb{R}}
\newcommand{\Sph}{\mathbb{S}}
\newcommand{\Prob}{\mathcal{P}}
\newcommand{\gN}{\mathsf{g}_N}
\newcommand{\energy}{\mathcal{I}}
\newcommand{\Cset}{\mathcal{C}}
\newcommand{\half}{\Sigma}
\newcommand{\hyper}{L}
\newcommand{\bulk}{\Omega}
\newcommand{\muhat}{\widehat{\mu}}
\newcommand{\mustar}{\mu_\star}

\newcommand{\ac}{a_{\rm c}}
\newcommand{\supp}{\operatorname{supp}}
\newcommand{\Ham}{\mathcal{H}}

\newcommand{\nuW}{\nu_{\scriptscriptstyle W}}
\newcommand{\muW}{\mu_{{\scriptscriptstyle W}\kern-2.2pt,\kern0.2pt a}}
\newcommand{\RW}{R_{\scriptscriptstyle W}}
\newcommand{\p}{\partial}

\begin{document}
	
    \title[Minimizers for Coulomb gases constrained to a halfspace]{Minimizers for Coulomb gases\\ constrained to a halfspace}
	\author{Rupert L. Frank}
    \address[Rupert L. Frank]{Mathe\-matisches Institut, Ludwig-Maximilians Universit\"at M\"unchen, The\-resien\-str.~39, 80333 M\"unchen, Germany, and Munich Center for Quantum Science and Technology, Schel\-ling\-str.~4, 80799 M\"unchen, Germany, and Mathematics 253-37, Caltech, Pasa\-de\-na, CA 91125, USA}
    \email{r.frank@lmu.de}
    
    \author{Paata Ivanisvili}
    \address[Paata Ivanisvili]{Department of Mathematics, University of California, Irvine, 510C Rowland Hall, Irvine, CA 92697-3875, USA}
    \email{pivanisv@uci.edu}
    
    \author{Clara Torres-Latorre}
    \address[Clara Torres-Latorre]{Instituto de Ciencias Matemáticas, Consejo Superior de Investigaciones Científicas,
        C/ Nicolás Cabrera, 13-15, 28049 Madrid, Spain}
    \email{\tt clara.torres@icmat.es}
    
	\date{\version}
	
	\begin{abstract}
		We consider a family of optimization problems, based on a mean-field description of particles interacting through Coulomb forces in a quadratic trap. In addition, the particles are constrained to lie in a halfspace and we are interested in the way the particle distribution changes as the halfspace varies. In particular, we can prove the existence of a phase transition, thereby settling a recent conjecture by Byun, Forrester, Majumdar and Schehr.
	\end{abstract}

    \subjclass{82B21, 31B15, 82B26, 35J05}
    \keywords{Coulomb gas, Equilibrium measure, Obstacle problem, Constrained energy minimization}
    
	\maketitle

    \section{Introduction and main result}

Recently, there has been a large number of works dedicated to minimizing energy functionals of the form
$$
\frac12 \iint_{\R^N\times\R^N} k(x-y) \,d\mu(x)\,d\mu(y) + \int_{\R^N} V(x) \,d\mu(x)
$$
over all Borel probability measures $\mu$ on $\R^N$. Such optimization problems arise in physics, biology and economics when modeling particles or agents whose interaction among each other and with the environment is described by the functions $k$ and $V$ respectively. They also arise in mathematics in the context of random matrices. Of particular interest in all these applications is the case where $k$ is slowly decaying, corresponding to long range interactions. We refer to \cite{ST97,CDFR14,Fra23,Ser26,BF25} for further background.

Some of these works are concerned with qualitative properties of minimizers \cite{BCLR13,CDM16,CFP17,CS23}, while others aim at finding explicit solutions in some concrete cases \cite{DLM22,DLM23,Fra22,FM25,CMSVW25,Shu25}. The effect of anisotropic kernels $k$ has also been analyzed in detail, for instance in \cite{MRS19,CMMRSV20,CMMRSV21,CS24a,CS24b,MMRSV23,FMMRSV26}.

In this paper we will consider a model suggested in a recent work of Byun, Forrester, Majumdar and Schehr \cite{BFMS26}, with precursors in \cite{ASZ14,DKMSS17}. This model is of interest since it features three distinct `phases' as a certain parameter varies. Our result will give qualitative information about the intermediate, nontrivial phase and will determine the precise value of the phase transition, thereby proving \cite[Conjecture 2.8]{BFMS26}.

Let us be more specific. Throughout the paper, $N$ is the dimension and $\Prob(\R^N)$ denotes the probability measures on $\R^N$. We study the energy functional
\begin{equation}\label{e.energy}
    \energy[\mu]:=\iint \gN(x-y)\,d\mu(x)\,d\mu(y)+\int|x|^2\,d\mu(x),
	\qquad \mu\in\Prob(\R^N),
\end{equation}
with kernel
\begin{equation*}
    	\gN(x) :=
    	\begin{cases}
    		-\log|x|, & N=2,\\[2pt]
    		\dfrac{1}{N-2}\,|x|^{-(N-2)}, & N\neq2.
    	\end{cases}
\end{equation*}
This normalization of the kernel is chosen so that the infimum of $\energy[\mu]$ over $\mu\in\Prob(\R^N)$ is attained at
$$
d\mustar(x)=\tfrac{1}{|B_1|}\,\chi_{B_1}(x) \,dx \,,
$$
with $B_1$ denoting the unit ball in $\R^N$.

The minimization problem that we will consider depends on a parameter $a\in\R$ and consists in determining
\begin{equation}
    \label{e.Ia}
    I(a):=\inf\bigl\{\energy[\mu]:\ \mu\in\Prob(\R^N),\ \supp\mu\subset \{ x\in\R^N:\ x_N \geq a \} \bigr\} \,,
\end{equation}
where we split coordinates in $\R^N$ as $x=(x',x_N)\in\R^{N-1}\times\R$.

Our main results justify the following picture. As $a$ slides up from $-\infty$, the ball is untouched as long as $a\leq -1$ and then pushed. The mass begins to accumulate on the boundary hyperplane, while a bulk blob persists as long as $a<a_{\rm c}$. At the critical height $\ac$, the bulk is fully consumed and the measure collapses entirely onto the hyperplane as a flat profile. For a pictorial representation of this situation, we refer to Figure \ref{f.numerics}.

    \begin{figure}[h]
    \centering
    \includegraphics[width=0.82\textwidth]{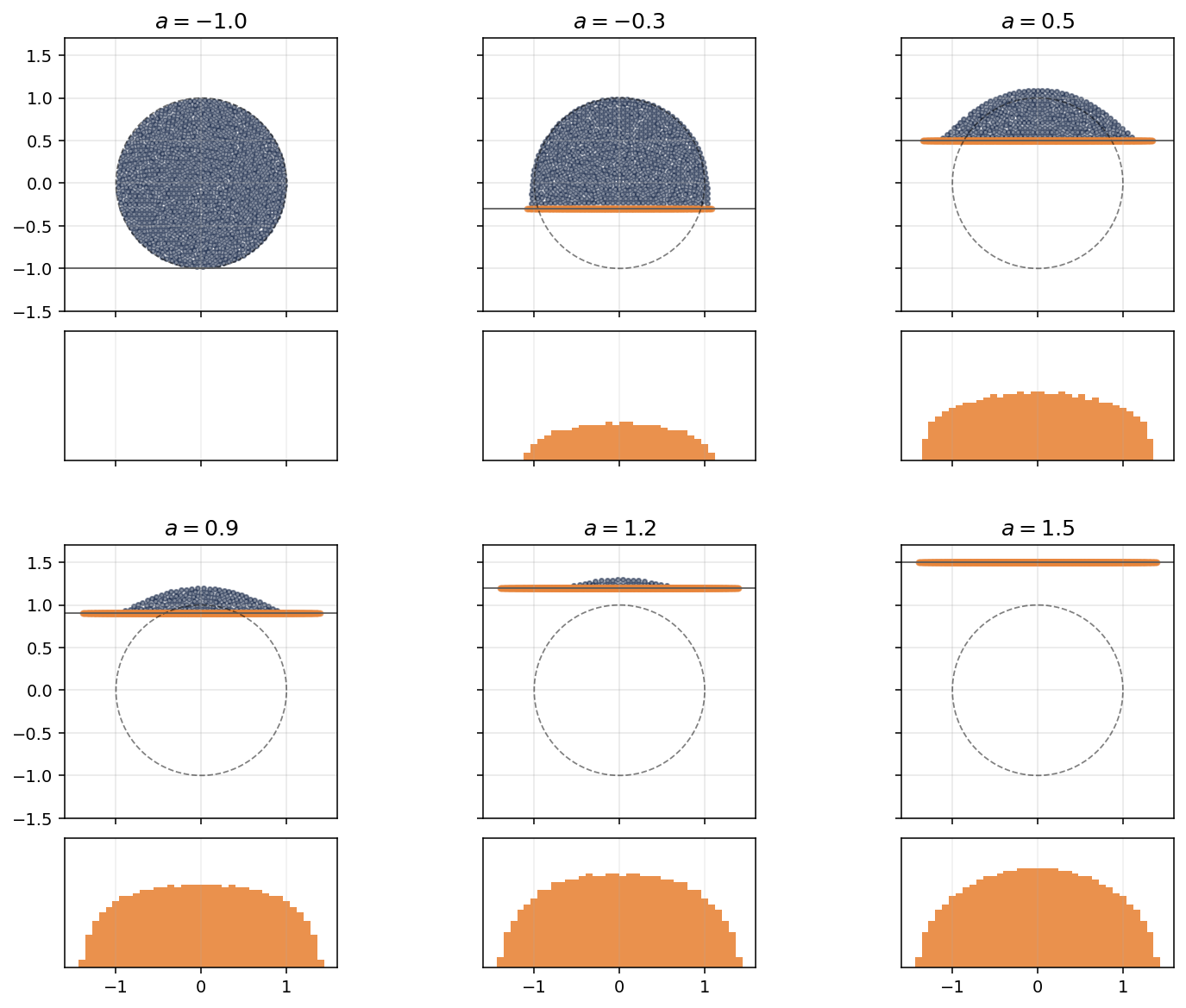}
    \caption{The constrained minimizer $\muhat_a$ for $N=2$ at six constraint levels. In each panel the gray horizontal line is $\{x_2=a\}$; orange particles (within $O(n^{-1/2})$ of it) approximate the singular layer $\muhat_{S,\kern0.2pt a}$, navy particles the bulk $\tfrac1\pi\chi_{\Omega_a}$, and the dashed curve is $\p B_1$. The strip below each panel shows the distribution of the wall mass along $x_1$. The configurations shown are discrete minimizers of a regularized particle energy with $n=1000$ particles, computed using a L-BFGS-B quasi-Newton method with analytic gradients. Details and code available at \url{https://github.com/ClaraTorresLatorre/2D_Coulomb_quadratictrap}.
    }
    \label{f.numerics}
    \end{figure}

A somewhat more quantitative description is the following:
\begin{itemize}
    \item If $a \leq -1$, then $\mustar$ is the unique minimizer for $I(a)$.
    \item If $-1<a<a_{\rm c}$, then there is a unique minimizer for $I(a)$. This minimizer is the sum of an absolutely continuous part, whose density is a constant times the characteristic function of a nonempty, axially symmetric set, and a nontrivial singular part, which is supported on the hyperplane $\{x_N=a\}$ and is absolutely continuous with respect to $(N-1)$-dimensional Lebesgue measure on $\{x_N=a\}$.
    \item If $a\geq a_{\rm c}$, then $I(a)$ is uniquely minimized by the measure $d\nuW(x')\otimes\delta_a(x_N)$ where
    \begin{equation}
        \label{e.nuw}
            	d\nuW(x') = \frac{2\RW\,\Gamma\!\left(\frac{N+1}{2}\right)}{\pi^{\frac{N+1}{2}}}
    	\sqrt{\left(1 - \frac{|x'|^2}{\RW^2}\right)_+}\,dx',
    	\qquad
    	\RW := \left(\frac{\sqrt{\pi}\,\Gamma\!\left(\frac{N}{2}+1\right)}{\Gamma\!\left(\frac{N+1}{2}\right)}\right)^{\!\frac1N}\!\!.
    \end{equation}
\end{itemize}
Moreover, the value where the transition occurs is
\begin{equation}
    \label{e.ac.RW}
    \ac := N\!\left(\frac{\Gamma\!\left(\frac{N+1}{2}\right)}{\sqrt{\pi}\,\Gamma\!\left(\frac{N}{2}+1\right)}\right)^{\!\frac{N-1}{N}}.
\end{equation}
The latter equality is \cite[Conjecture 2.8]{BFMS26}.

These results, and more, appear in our Theorems \ref{t.structure} and \ref{t.classification} below.

\subsection*{Statement of results}
We introduce the notations
\begin{equation*}
    	\half_a:=\{x\in\R^N:\ x_N\ge a\} \,,
    	\qquad \hyper_a:=\p\half_a=\{x_N=a\} \,,
\end{equation*}
as well as
\begin{equation*}
    	\Cset_a:=\bigl\{\mu\in\Prob(\R^N):\ \mu(\half_a)=1,\ \energy[\mu]<\infty\bigr\} \,.
\end{equation*}
Since $\energy$ is strictly convex, lower semicontinuous and coercive, for any $a\in\R$ the infimum in \eqref{e.Ia} is attained at a unique minimizer (see, e.g., \cite{Ser26}):
    \begin{equation*}
    	\muhat_a:=\operatorname*{arg\,min}_{\mu\in\Cset_a}\energy[\mu] \,,
    	\qquad I(a)=\energy[\muhat_a] \,.
    \end{equation*}
Also, we let
$$
\bulk_a:=\operatorname{int}(\supp\muhat_a) \,.
$$
By uniqueness of $\muhat_a$, the set $\bulk_a$ is axially symmetric with respect to the $e_N$-axis.
    
Since $\mustar$ is a minimizer of the unconstrained minimization problem, it is clear that $\muhat_a = \mustar$ for $a\leq -1$, so we will focus on the case $a>-1$.

Many of the properties of $\muhat_a$ will follow from its characterization via an obstacle problem. This is a generalization of \cite[Theorem 1]{ASZ14}. For the potential of a measure $\mu$ we use the notation
\begin{equation*}
    H^\mu(x):=\int \gN(x-y)\,d\mu(y) \,.
\end{equation*}
Since $-\Delta\gN=|\Sph^{N-1}|\,\delta_0$ in $\mathcal D'(\R^N)$, we have
$-\Delta H^\mu=|\Sph^{N-1}|\,\mu$.
    
	\begin{theorem}\label{t.structure}
		The potential $H^{\muhat_a}$ solves the obstacle problem
		\begin{equation*}
			\min\{-\Delta H^{\muhat_a},\ H^{\muhat_a}-\psi_a\}=0\ \text{ in }\R^N,
			\qquad
			\psi_a(x)=
			\begin{cases}
				\tfrac12\bigl(c_a-|x|^2\bigr), & x\in\half_a,\\[2pt]
				-\infty, & x\notin\half_a,
			\end{cases}
		\end{equation*}
		for some constant $c_a$. Moreover:
		\begin{enumerate}
            \item[(a)] The potential satisfies $H^{\muhat_a} = \frac{1}{2}(c_a-|x|^2)$ on $\supp \muhat_a$.
			\item[(b)] The minimizer $\muhat_a$ has compact support, and it decomposes as
			\begin{equation*}
				d\muhat_a=\tfrac{1}{|B_1|}\chi_{\bulk_a} \,dx + d\muhat_{S,\kern0.2pt a},
				\qquad
				d\muhat_{S,\kern0.2pt a}=g_a\,d\Ham^{N-1}\!\restriction_{\hyper_a},
			\end{equation*}
            with $g_a \in L^\infty(\hyper_a)$.\vspace{1mm}
            \item[(c)] For $a_1 < a_2$, $\Omega_{a_1}\cap\{x_N > a_2\} \subseteq \Omega_{a_2}$; in particular, for $a < 1$, $\Omega_a\neq\emptyset$.\vspace{1mm}
			\item[(d)] For $a \in (-1,1)$, $B_1\cap \hyper_a \subseteq \supp\muhat_{S,\kern0.2pt a}$; in particular, $\muhat_{S,\kern0.2pt a} \neq 0$.
		\end{enumerate}
	\end{theorem}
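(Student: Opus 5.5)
We first derive the Euler--Lagrange conditions for $\muhat_a$ and then read off (a)--(d) from them. Since $\energy$ is convex on $\Cset_a$, testing against $(1-t)\muhat_a+t\nu$ for $\nu\in\Cset_a$ and letting $t\downarrow0$ gives
\[
2H^{\muhat_a}+|x|^2\ge C_a\quad\text{q.e. on }\half_a,\qquad
2H^{\muhat_a}+|x|^2= C_a\quad\text{$\muhat_a$-a.e.}
\]
for the Lagrange constant $C_a=2\int H^{\muhat_a}\,d\muhat_a+\int|x|^2d\muhat_a$. Writing $c_a=C_a$, this says $H^{\muhat_a}\ge\psi_a$ q.e. and $H^{\muhat_a}=\psi_a$ on $\supp\muhat_a$. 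Here we use that $H^{\muhat_a}$ is superharmonic, hence lower semicontinuous, and that $\psi_a$ is continuous on $\half_a$; this upgrades the a.e. and q.e. statements to everywhere statements on $\supp\muhat_a$, which is (a).

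Compact support comes from the inequality. For large $|x|$, the potential $H^{\muhat_a}$ is at least $-\log(|x|+C)$ (or $\ge0$ when $N\ge3$), while $\psi_a\to-\infty$ quadratically. Hence the equality in (a) forces $\supp\muhat_a$ to be bounded; if necessary one first truncates using a second-moment bound.

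Next we verify that $\min\{-\Delta H,H-\psi_a\}=0$. We have $-\Delta H\ge0$ and $H-\psi_a\ge0$ everywhere, because $\psi_a=-\infty$ off $\half_a$. Off $\supp\muhat_a$ we have $-\Delta H=0$. On $\supp\muhat_a$ we have $H=\psi_a$.

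For the decomposition in (b), work in the open set $\{x_N>a\}$. There $H-\psi_a\ge0$ vanishes on $\supp\muhat_a$ and $\psi_a$ is smooth. The standard obstacle-problem regularity theory then gives $H\in C^{1,1}_{loc}(\{x_N>a\})$. On the contact set we have $\Delta H=\Delta\psi_a=-N$ a.e., so
\[
\muhat_a\restriction_{\{x_N>a\}}=\frac{N}{|\Sph^{N-1}|}\chi_{\{H=\psi_a\}}\,dx=\frac1{|B_1|}\chi\,dx .
\]
Since this measure has constant density on its support, the contact set coincides up to null sets with $\bulk_a\cap\{x_N>a\}$. The remaining part of $\muhat_a$ lives on $\hyper_a$. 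Its density comes from the jump of $\partial_N H$ across $\hyper_a$.

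The \textbf{main obstacle} is showing $g_a\in L^\infty$, i.e.\ that $\nabla H$ is bounded near $\hyper_a$. First I would try the following comparison argument. On $\hyper_a\cap\supp\muhat_a$ the potential $H$ equals the smooth function $\psi_a$ and $H\ge\psi_a$ on $\hyper_a$, so we compare $H$ with smooth barriers. From above, $H$ is bounded by the solution of a Dirichlet problem in $\{x_N<a\}$, where $H$ is harmonic, with boundary data on $\hyper_a$. From below, $H\ge\psi_a$ in $\{x_N>a\}$. This yields a one-sided Lipschitz bound for $\partial_N H$ on each side. Combined with superharmonicity, it gives $g_a=|\Sph^{N-1}|^{-1}[\partial_NH^+-\partial_NH^-]$ bounded, where we also use the $C^{1,1}$ bound of $\psi_a$ and the boundedness of $\supp\muhat_a$. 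Equivalently, $H$ solves a thin-obstacle-type problem, and the known optimal $C^{1,1/2}$ regularity would suffice.

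For (c), I use a comparison principle for obstacle problems. Take $a_1<a_2$ and a point $x\in\bulk_{a_1}$ with $x_N>a_2$. Enlarging the admissible region lowers the constraint, so $\muhat_{a_2}$ pushes mass toward the bulk. Concretely, set $w=H^{\muhat_{a_2}}-H^{\muhat_{a_1}}-\tfrac12(c_{a_2}-c_{a_1})$ and show via the maximum principle that $w\le0$ in $\{x_N>a_2\}$. Then $H^{\muhat_{a_2}}\le\psi_{a_2}$ near $x$, hence $H^{\muhat_{a_2}}=\psi_{a_2}$ near $x$, and so $x\in\bulk_{a_2}$. The ordering $c_{a_2}\ge c_{a_1}$ comes from the monotonicity of $I(a)$. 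The consequence $\bulk_a\ne\emptyset$ for $a<1$ follows by taking $a_1=-1$, where $\bulk_{-1}=B_1$.

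For (d), suppose instead that there is a relatively open set $U\subset B_1\cap\hyper_a$ with $\muhat_{S,a}(U)=0$. Then $H$ is harmonic across $U$ from below and equals $\psi_a$ from above, by (c) applied with the bulk next to $U$. The function $\Delta(H-\psi_a)$ then jumps from $0$ to $N$ on a set where $H-\psi_a\ge0$ attains the value $0$. By a Hopf-lemma argument this contradicts minimality of $H-\psi_a$ at points of $U$.
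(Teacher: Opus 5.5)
Your treatment of the Frostman relations, compact support and the bulk density essentially matches the paper. The argument for (c)--(d), however, rests on an unproved claim. You assert that $c_{a_2}\ge c_{a_1}$ ``comes from the monotonicity of $I(a)$'', but $c_a$ is not $I(a)$. Integrating the Frostman equality against $\muhat_a$ gives $c_a=I(a)+\iint\gN\,d\muhat_a\,d\muhat_a$, and nothing in your argument controls how the interaction term varies with $a$. (For $N\ge3$, the mass scaling $\inf\{\energy[\mu]:\mu\ge0,\ \mu(\R^N)=m,\ \supp\mu\subset\half_a\}=m^{1+2/N}I(am^{-1/N})$ formally gives $c_a=\frac{N+2}{N}I(a)-\frac aN I'(a)$. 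For $a>0$ the second term has the unfavorable sign, so $I'\ge0$ alone yields nothing.)

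This ordering is exactly what your maximum principle needs. The function $w=H^{\muhat_{a_2}}-H^{\muhat_{a_1}}-\frac12(c_{a_2}-c_{a_1})$ is subharmonic only on the unbounded set $\R^N\setminus\supp\muhat_{a_2}$, and it tends to $-\frac12(c_{a_2}-c_{a_1})$ at infinity. So comparison yields $w\le0$ only if $c_{a_2}\ge c_{a_1}$, and part (d), which you derive from (c) with $a_1=-1$, inherits the gap. The paper proves this monotonicity as a separate lemma, by contradiction:
\begin{itemize}
\item Suppose $\varepsilon=\frac12(c_a-c_b)>0$ for some $a<b$. Then $H^{\muhat_b}\le H^{\muhat_a}-\varepsilon$ on $\supp\muhat_b$.
\item The Riesz decomposition gives $\min\{H^{\muhat_b},H^{\muhat_a}-\varepsilon\}=H^\lambda-\varepsilon$, with $\lambda$ a probability measure of finite energy.
\item An energy computation gives $\|\muhat_b-\lambda\|^2\le0$, so $\lambda=\muhat_b$. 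Hence $H^{\muhat_a}\le H^{\muhat_b}$, a contradiction.
\end{itemize}
For $N=2$ this is the Saff--Totik argument.

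Two further steps do not work as written.

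In (b), $|\Sph^{N-1}|\,g_a=\p_{x_N}\bigl(H|_{\{x_N<a\}}\bigr)-\p_{x_N}\bigl(H|_{\half_a}\bigr)$. Bounding $g_a$ from above therefore needs a lower bound on the upper trace, which $H\ge\psi_a$ on $\half_a$ does give at contact points. It also needs an \emph{upper} bound on the lower trace, which requires a harmonic \emph{lower} barrier for $H$ in $\{x_N<a\}$, with data $\frac12(c_a-|x|^2)$ on $\hyper_a$, touching $H$ at contact points. Your upper barrier gives the inequality in the useless direction, and you have no upper bound on $H|_{\hyper_a}$ to build it anyway. The correct lower barrier is precisely the paper's modified obstacle $\tilde\psi_a$. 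With it, Caffarelli's estimate for Lipschitz obstacles gives $H\in C^{0,1}$ near $\hyper_a$, and the one-sided traces are bounded by the Lipschitz constant. This also settles how pointwise bounds become an $L^\infty$ bound on the density. The thin-obstacle remark can only concern one-sided regularity, since by (d) $\p_{x_N}H$ genuinely jumps across $B_1\cap\hyper_a$.

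In (d), ``minimality of $H-\psi_a$'' gives nothing below $\hyper_a$, where $\psi_a=-\infty$. Above $U$ you have $H-\psi_a\equiv0$, so the Hopf lemma has no sign to act on. What is needed is $H^{\muhat_a}<\frac12(c_a-|x|^2)$ in $B_1\cap\{x_N<a\}$. The paper obtains this from $w=H^{\mustar}-H^{\muhat_a}-\frac12(c_\star-c_a)\ge0$, which again uses $c_a\ge c_\star$. In $B_1$ this $w$ equals $\frac12(c_a-|x|^2)-H^{\muhat_a}$. It is positive with $-\Delta w=N$ below $\hyper_a$, and it vanishes with zero gradient at points of $U$, contradicting Hopf.
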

    
Theorem \ref{t.structure} does not address the question of whether the set $\Omega_a$ is empty or not and whether $\muhat_{S,\kern0.2pt a}$ is trivial or not when $a \geq 1$. The following result, which answers both questions, is our main result. We recall that $a_{\rm c}$ is defined in \eqref{e.ac.RW}. For a concise statement, we set
    \begin{equation*}
    	\lambda(a):=\muhat_a(\bulk_a)=\frac{|\bulk_a|}{|B_1|},
    \end{equation*}
    where the second equality comes from Theorem~\ref{t.structure}(b). 

	\begin{theorem}\label{t.classification}
        The minimizer $\muhat_a$ behaves as follows:
		\begin{enumerate}
			\item[(a)] \emph{(Unconstrained.)} If $a\le-1$, then $d\muhat_a=\tfrac{1}{|B_1|}\chi_{B_1}\,dx$;
			in particular $\lambda(a)=1$ and $\muhat_{S,\kern0.2pt a}=0$.\vspace{1mm}
			\item[(b)] \emph{(Coexistence.)} If $-1<a<\ac$, then $\lambda(a)\in(0,1)$ and
			$\muhat_{S,\kern0.2pt a}\ne0$.\vspace{1mm}
			\item[(c)] \emph{(Purely singular.)} If $a\ge\ac$, then $\lambda(a)=0$ and
			$\muhat_a=\muhat_{S,\kern0.2pt a}$ is supported on $\hyper_a$.
		\end{enumerate}
        Moreover, in the purely singular case, we have
        $$d\muhat_a(x) = d\nuW(x')\otimes\delta_a(x_N),$$
        with $d\nuW$ from \eqref{e.nuw}.
	\end{theorem}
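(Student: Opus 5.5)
The plan is to combine the obstacle-problem characterization of Theorem~\ref{t.structure} with an explicit analysis of measures supported on the wall $\hyper_a$. Part (a) is immediate: for $a\le-1$ the measure $\mustar$ is admissible and minimizes without the constraint. For the rest, I will use the Frostman (Euler--Lagrange) characterization, which follows from strict convexity of $\energy$ and Theorem~\ref{t.structure}. Set $u:=H^{\mu}+\tfrac12|x|^2-\tfrac12 c$. Then $\mu\in\Cset_a$ is the minimizer if and only if, for some constant $c$, we have $u\ge0$ on $\half_a$ and $u=0$ on $\supp\mu$.

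\emph{Step 1: the wall problem.} Restrict to measures $\nu\otimes\delta_a$ with $\nu\in\Prob(\R^{N-1})$. The energy becomes $\iint \gN(x'-y')\,d\nu\,d\nu+\int(|x'|^2+a^2)\,d\nu$. For $N\ge3$ this is a Riesz $(N-2)$-energy in $\R^{N-1}$, i.e.\ the kernel of $(-\Delta_{x'})^{-1/2}$ up to a constant; for $N=2$ it is the logarithmic energy on a line. The minimizer with a quadratic field has the explicit semicircle-type profile \eqref{e.nuw}, which is a classical Dyda/Riesz-type computation. The radius $\RW$ is then fixed by the unit-mass normalization. This step yields the $(N-1)$-dimensional Frostman conditions: $u(x',a)=0$ for $|x'|\le\RW$ and $u(x',a)\ge0$ otherwise.

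\emph{Step 2: extension off the wall and the value of $\ac$.} For $\mu_W:=\nuW\otimes\delta_a$, the potential $H^{\mu_W}(x',a+s)$ is even in $s$ and harmonic for $s>0$. Its normal derivative is $\p_sH^{\mu_W}(\cdot,a+s)=-\tfrac{|\Sph^{N-1}|}{2}\,P_s*g$, where $P_s$ is the Poisson kernel of the halfspace and $g$ is the density of $\nuW$. Since $g$ is maximal at $x'=0$, this gives
\begin{equation*}
\p_s u(x',a+s)=a+s-\tfrac{|\Sph^{N-1}|}{2}\,(P_s*g)(x')\ \ge\ a+s-\tfrac{|\Sph^{N-1}|}{2}\,g(0).
\end{equation*}
Hence, if $a\ge \tfrac{|\Sph^{N-1}|}{2}g(0)$, then $u$ is nondecreasing in $s$ on $\half_a$. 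Combined with Step 1, this gives $u\ge0$ on $\half_a$, so $\mu_W$ is the minimizer. A direct computation with $|\Sph^{N-1}|=2\pi^{N/2}/\Gamma(N/2)$ and the formulas in \eqref{e.nuw} should show that $\tfrac{|\Sph^{N-1}|}{2}g(0)=\ac$ as in \eqref{e.ac.RW}. This proves (c), including $\lambda(a)=0$, by uniqueness of the minimizer.

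\emph{Step 3: coexistence for $-1<a<\ac$.} Theorem~\ref{t.structure}(d) gives $\muhat_{S,a}\ne0$ for $a\in(-1,1)$, hence $\lambda(a)<1$. For $a\ge1$ (which requires $\ac>1$), a nontrivial wall part must still exist: otherwise $\muhat_a$ would be absolutely continuous and supported in $\half_a$. It would then equal the unconstrained minimizer $\mustar$, but $\mustar$ is not supported in $\half_a$. To show $\lambda(a)>0$, suppose instead that $\lambda(a)=0$. Then $\muhat_a=\nu\otimes\delta_a$, and its Frostman conditions restricted to $\hyper_a$ force $\nu=\nuW$ by uniqueness in Step 1. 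But at $x'=0$ the normal slope is $\p_su(0,a^+)=a-\ac<0$, so $u<0$ just above the wall. This contradicts the Euler--Lagrange inequality. Hence $\lambda(a)\in(0,1)$, which proves (b).

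The main obstacle is Step 1 together with the constant check in Step 2. One must identify the explicit wall equilibrium measure, including the correct normalization of the half-Laplacian kernel in dimension $N-1$ and the logarithmic case $N=2$. One must also verify that $\RW$ and $g(0)$ combine exactly to $\ac$. I would handle this by quoting the known formula $(-\Delta)^{1/2}(1-|x'|^2)_+^{1/2}=\text{const}$ on the unit ball, and then tracking the constants through the scaling $x'\mapsto x'/\RW$.
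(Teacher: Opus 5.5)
There is a genuine gap in Step~3, for $a\in[1,\ac)$. You claim that if $\muhat_{S,\kern0.2pt a}=0$ then $\muhat_a$ ``would then equal the unconstrained minimizer $\mustar$''. Nothing in your argument supports this:

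\begin{itemize}
\item Lemma~\ref{l.char} only gives $2H^{\muhat_a}+|x|^2\ge c_a$ on $\half_a$. To identify $\muhat_a$ with $\mustar$ you would need this inequality on all of $\R^N$.
\item In $\{x_N<a\}$ the function $2H^{\muhat_a}+|x|^2-c_a$ has Laplacian $2N>0$. It is therefore subharmonic, and its nonnegativity on $\hyper_a$ gives no lower bound below the wall.
\item The absence of a wall layer only says that the normal derivative of $H^{\muhat_a}$ does not jump across $\hyper_a$. That does not help either.
\end{itemize}

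The missing idea is the force-balance identity from Proposition~\ref{p.singular}. Suppose $\muhat_a=\tfrac{1}{|B_1|}\chi_{\bulk_a}\,dx$. Differentiating Theorem~\ref{t.structure}(a) on the open set $\bulk_a$ gives $x=\int\frac{x-y}{|x-y|^N}\,d\muhat_a(y)$ there. Integrating against $\muhat_a$ and using the antisymmetry of the kernel gives $\int x\,d\muhat_a=0$. This contradicts $\int x_N\,d\muhat_a\ge a>0$. This is exactly the case $\lambda=1$ of Proposition~\ref{p.singular}, and it covers every $a>0$, in particular $[1,\ac)$.

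The rest of your argument is sound. Your proof that $\lambda(a)>0$ for $a<\ac$ coincides with the paper's.

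Your Step~2 is a genuinely simpler route to (c) than the paper's:
\begin{itemize}
\item The paper goes up the axis using Lemma~\ref{l.normal-field}. It then goes outward in $r$ at each height using Lemma~\ref{mono}, which is the heavy part: hypergeometric identities, Gauss's summation, and lifting $r^{-1}\p_rG$ to a harmonic function on $\R^{N+1}\times\R_+$.
\item You instead move outward along the wall, which is exactly \eqref{eq:eln-1}. You then move upward at each fixed $x'$. This only needs $P_s*g\le g(0)$, because $P_s$ is a probability density and $g$ peaks at the origin.
\item The constant check is correct: $\tfrac{|\Sph^{N-1}|}{2}g(0)=\frac{2\RW\,\Gamma(\frac{N+1}{2})}{\sqrt\pi\,\Gamma(\frac N2)}=N\RW^{1-N}=\ac$.
\item This bypasses Lemma~\ref{mono} entirely. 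It also bypasses the strict monotonicity of $A$, since only $A\le\ac$ is really used.
\item It works verbatim for $N=2$, where $\pi g(0)=\RW=\sqrt2$. The paper instead cites \cite{ASZ14} in that case.
\end{itemize}
What the paper's route buys in addition is radial monotonicity of $\Phi$ at every height. That is a structural fact the theorem does not need.
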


In \cite[Theorem 2.7]{BFMS26} it is shown that $\lambda(a)>0$ for $a<a_{\rm c}$ and in \cite[Conjecture 2.8]{BFMS26} it is suggested that $\lambda(a)=0$ for $a\geq a_{\rm c}$. Theorem \ref{t.classification} answers affirmatively this conjecture and, in addition, settles the qualitative behavior of $\muhat_a$ for $a \in (-1,\ac)$. For previous results on this conjecture, we refer to Remark \ref{r.previous} below.

    \subsection*{Acknowledgements}
    
    Much of this work was carried out during the visits of P.I. and C.T.-L. to Munich. They wish to thank R.L.F. and the Department of Mathematics at the Ludwig-Maximilians-Universität München for their support and hospitality.

    R.L.F. acknowledges partial support through US National Science Foundation grant DMS-1954995, as well as through the German Research Foundation through EXC-2111-390814868, TRR 352-Project-ID 470903074 and FR 2664/3-1. P.~I.~acknowledges partial support from the US NSF CAREER grant DMS-2152401, US NSF grant DMS-2554183, a Simons Fellowship, and a Humboldt Research Fellowship for Experienced Researchers.
    C.T.-L. has received funding from the European Research Council (ERC) under the Grant Agreement No. 862342, from AEI project PID2024-156429NB-I00 (Spain), and from the Grant CEX2023-001347-S funded by MICIU/AEI/10.13039/501100011033 (Spain).   

    \subsection*{Use of generative AI}

    The authors acknowledge the use of AI tools during the exploratory stage of this project. All mathematical arguments and proofs in the final manuscript were checked and written by the authors.

\section{Connection to the obstacle problem}\label{s.obstacle}

First, we recall that the Euler--Lagrange equations (also known as Frostman relations in this context) uniquely determine the minimizer. Below, ``q.e.'' (quasi-everywhere) means outside a set of zero capacity.

    \begin{lemma}\label{l.char}
	Let $\mu\in\Cset_a$. Then $\mu=\muhat_a$ if and only if there is a constant $c_a\in\R$ with
	\begin{equation}\label{e.frostman}
		2H^{\mu}+|x|^2\ \ge\ c_a\quad\text{q.e.\ on }\half_a,
		\qquad
		2H^{\mu}+|x|^2\ =\ c_a\quad\text{q.e.\ on }\supp\mu.
	\end{equation}
    \end{lemma}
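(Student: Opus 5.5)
The plan is to prove both directions through the first variation of the convex functional $\energy$ along segments inside $\Cset_a$, together with the identity
\[
\energy[\nu]-\energy[\mu]=2\int \Bigl(H^\mu+\tfrac12|x|^2\Bigr)d(\nu-\mu)+\iint \gN(x-y)\,d(\nu-\mu)(x)\,d(\nu-\mu)(y),
\]
valid for $\mu,\nu\in\Cset_a$. The last term is nonnegative because $\gN$ is positive definite on signed measures of total mass zero and finite energy. For $N=2$ one first notes that $\int|x|^2d\mu<\infty$ controls the logarithmic tails, so that all the integrals make sense. This is the standard framework (see \cite{Ser26}), and it is the only analytic input I need.

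\emph{Sufficiency.} Assume $\mu\in\Cset_a$ satisfies \eqref{e.frostman}, and let $\nu\in\Cset_a$. Since $\nu$ has finite energy, it does not charge sets of zero capacity, and $\nu(\half_a)=1$. Integrating the inequality in \eqref{e.frostman} against $\nu$ therefore gives $\int(2H^\mu+|x|^2)\,d\nu\ge c_a$. Likewise, $\mu$ does not charge polar sets, so the equality part integrated against $\mu$ gives $\int(2H^\mu+|x|^2)\,d\mu=c_a$. The linear term in the identity is then $\ge0$, hence $\energy[\nu]\ge\energy[\mu]$, and $\mu=\muhat_a$ by uniqueness of the minimizer.

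\emph{Necessity.} Let $\mu=\muhat_a$ and set $F:=2H^\mu+|x|^2$, which is lower semicontinuous and bounded below on bounded sets. For $\nu\in\Cset_a$ and $t\in(0,1)$, the measure $\mu+t(\nu-\mu)$ lies in $\Cset_a$. Expanding the energy in $t$ and letting $t\downarrow0$ gives
\[
\int F\,d\nu\ \ge\ \int F\,d\mu=:c_a .
\]
This quantity is finite since $\energy[\mu]<\infty$.

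To get the inequality in \eqref{e.frostman}, suppose the set $E=\{x\in\half_a: F(x)<c_a-\varepsilon\}$ has positive capacity for some $\varepsilon>0$. Then there is a compact $K\subset E$ of positive capacity, and its equilibrium measure $\nu_K$ has finite energy, is supported in $\half_a$, and satisfies $\int F\,d\nu_K\le c_a-\varepsilon$. This contradicts the variational inequality. Letting $\varepsilon\downarrow0$ along a sequence, $F\ge c_a$ q.e. on $\half_a$.

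For the equality, combine $F\ge c_a$ $\mu$-a.e., which holds because $\mu$ does not charge polar sets, with $\int F\,d\mu=c_a$. This gives $F=c_a$ $\mu$-a.e. To upgrade to q.e. on $\supp\mu$, use lower semicontinuity: the set $\{F>c_a+\varepsilon\}\cap\supp\mu$ is relatively open in $\supp\mu$, but it need not be $\mu$-null a priori. So argue instead as follows. The set $\{F\le c_a\}$ has full $\mu$-measure, hence is dense in $\supp\mu$. Then use that $H^\mu$ is superharmonic and satisfies the fine-continuity/continuity principle: a potential bounded above by a constant on the support of its measure (here $H^\mu\le \frac12(c_a-|x|^2)$ $\mu$-a.e., the right side being continuous) obeys the same bound everywhere on $\supp\mu$. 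This is the Maria--Frostman domination principle, applied to $H^\mu$ compared with the continuous function $\frac12(c_a-|x|^2)$ restricted to $\supp\mu$. It gives $F\le c_a$ on all of $\supp\mu$, and together with $F\ge c_a$ q.e. on $\half_a\supset\supp\mu$ this yields equality q.e.

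\emph{Expected obstacle.} This last upgrade from $\mu$-a.e. to q.e. on $\supp\mu$ is where I expect the main difficulty. If the domination principle in that form is awkward because the comparison function is not harmonic, I would fall back on the usual convention in \cite{Ser26}: prove equality $\mu$-a.e. and note that the upper bound $F\le c_a$ holds everywhere on $\supp\mu$ by lower semicontinuity of $-F$ is false in general. The fallback would then be to apply the maximum principle on small balls to $H^\mu+\frac12|x|^2$, which is superharmonic minus a smooth function with $\Delta(\frac12|x|^2)=N$. A secondary technical point is justifying the expansion and the finiteness of the cross terms for $N=2$, which I would handle via the quadratic confinement bound $-\log|x-y|\ge -\log(1+|x|)-\log(1+|y|)$.
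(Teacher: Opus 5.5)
Your overall route is the standard argument the paper defers to (\cite{La72}, \cite[Lemmas 2.1, 2.2]{ASZ14}), and most of it is fine. This covers sufficiency via the energy identity, the first variation giving $\int F\,d\nu\ge\int F\,d\mu=c_a$, and the test against equilibrium measures of compact subsets of $\{F<c_a-\varepsilon\}\cap\half_a$ to get $F\ge c_a$ q.e. The gap is in the last step, the passage from $F=c_a$ $\mu$-a.e.\ to $F\le c_a$ on $\supp\mu$.

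The domination principle you invoke does not apply there. It needs the majorant to be a potential or a nonnegative superharmonic function, and $\tfrac12(c_a-|x|^2)$ is negative for large $|x|$. It also concludes the bound on all of $\R^N$, and $F\le c_a$ on $\R^N$ is false here, since $F(x)\to+\infty$ as $|x|\to\infty$. Your statement that $\{F>c_a+\varepsilon\}\cap\supp\mu$ ``need not be $\mu$-null a priori'' contradicts what you had just proved. The fallback paragraph is not an argument either; the sentence about lower semicontinuity of $-F$ inverts which semicontinuity is relevant.

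The fix is the argument you set aside. Since $F=2H^\mu+|x|^2$ is lower semicontinuous, the set $U_\varepsilon:=\{F>c_a+\varepsilon\}$ is open. It is $\mu$-null precisely because $F=c_a$ $\mu$-a.e. An open $\mu$-null set cannot meet $\supp\mu$, so $F\le c_a+\varepsilon$ on $\supp\mu$ for every $\varepsilon>0$. Hence $F\le c_a$ everywhere on $\supp\mu$. Equivalently, $\{F\le c_a\}$ is closed and has full $\mu$-measure, so it contains $\supp\mu$. Combined with $F\ge c_a$ q.e.\ on $\half_a\supset\supp\mu$, this gives the q.e.\ equality in \eqref{e.frostman}. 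With this one-line replacement the proof is complete, and no maximum or domination principle is needed.
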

    
    \begin{proof}
        The proof is standard \cite{La72}. For instance, one can follow that of \protect{\cite[Lemmas 2.1, 2.2]{ASZ14}}, replacing the logarithmic kernel by the Coulomb potential $\gN$.
    \end{proof}

    The constant $c_a$ is sometimes referred to as the Frostman constant. When $a \leq -1$, $\muhat_a = \mustar$, and we will write $c_\star$ instead of $c_a$.

    Our next step is parallel to \cite[Lemma 2.3]{ASZ14}.
    \begin{lemma}\label{l.obstacle}
        Let $N \geq 2$. The function $H^{\muhat_a}$ is the unique element of
        $$\mathcal{K} := \left\{v \in H^1_{\mathrm{loc}} \ : \ v - H^{\muhat_a} \text{ has bounded support and } 2v + |x|^2 \geq c_a \quad \text{q.e.\ on }\half_a\right\}$$
        that satisfies
        $$\int_{\R^N}\nabla H^{\muhat_a}(y)\cdot\nabla(v-H^{\muhat_a})(y)\,dy \geq 0, \quad \forall v \in \mathcal{K}.$$
    \end{lemma}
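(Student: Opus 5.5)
Write $u := H^{\muhat_a}$. The plan is to derive the variational inequality from the Frostman relations of Lemma~\ref{l.char}, and then get uniqueness from strict monotonicity of the Dirichlet form.

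\emph{Membership $u\in\mathcal K$.}
Since $\muhat_a$ is compactly supported with finite energy, $u\in H^1_{\rm loc}$. Indeed, $\nabla u = \nabla \gN * \muhat_a$ is locally $L^2$, and $\int|\nabla u|^2$ over bounded sets is controlled by the energy; for $N=2$ only a locally square-integrable correction appears. The obstacle condition $2u+|x|^2\ge c_a$ q.e.\ on $\half_a$ is exactly the first relation in \eqref{e.frostman}.

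\emph{Variational inequality.}
Take $v\in\mathcal K$ and set $\varphi := v-u$, which lies in $H^1$ and has bounded support. Because $-\Delta u = |\Sph^{N-1}|\muhat_a$ distributionally and $\varphi$ is compactly supported,
$$\int\nabla u\cdot\nabla\varphi\,dy = |\Sph^{N-1}|\int\tilde\varphi\,d\muhat_a ,$$
where $\tilde\varphi$ is the quasi-continuous representative. To justify this identity I would mollify $\varphi$ and pass to the limit. This is legitimate because $\muhat_a$ has finite energy, hence lies in $H^{-1}$ and charges no sets of zero capacity, so quasi-continuous representatives are integrable against it.

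On $\supp\muhat_a\subset\half_a$ the q.e.\ inequalities give
$$2v+|x|^2 \ge c_a = 2u+|x|^2,$$
so $\tilde\varphi\ge0$ q.e.\ on the support, and therefore $\muhat_a$-a.e. Hence the integral is $\ge 0$.

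\emph{Uniqueness.}
Suppose $w\in\mathcal K$ satisfies the same inequality. Then $u-w$ has bounded support, so $u\in\mathcal K$ is an admissible test function for $w$ and $w$ is admissible for $u$. Adding the two inequalities gives
$$\int|\nabla(u-w)|^2\,dy\le0 .$$
So $u-w$ is constant, and since it has bounded support, $w=u$. Note that $\mathcal K$ is defined in terms of $u$ itself, so uniqueness is only claimed within that class, and this argument suffices.

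\emph{Main obstacle.}
The delicate step is the integration by parts against a merely $H^1$, compactly supported $\varphi$ together with the quasi-everywhere bookkeeping, i.e.\ identifying $\int\nabla u\cdot\nabla\varphi$ with $|\Sph^{N-1}|\int\tilde\varphi\,d\muhat_a$. I would handle it as in \cite[Lemma 2.3]{ASZ14}: approximate $\varphi$ in $H^1$ by smooth functions whose quasi-continuous limits converge q.e., and use that finite-energy measures do not charge polar sets. For $N=2$ one must also check local integrability of $\nabla u$ near infinity, which is harmless since only bounded supports are involved.
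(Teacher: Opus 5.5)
Your proof is correct and follows essentially the paper's route. The paper cites \cite[Lemma 2.3]{ASZ14} for the variational inequality (Frostman relations tested against $v-H^{\muhat_a}$ via integration by parts) and for uniqueness (adding the two inequalities). Its only extra input for $N\ge3$ is the gradient bound you merely asserted, which it proves globally by Plancherel: $\int_{\R^N}|\nabla H^{\muhat_a}|^2\,dx=|\Sph^{N-1}|\iint\gN(x-y)\,d\muhat_a(x)\,d\muhat_a(y)<\infty$.
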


    \begin{proof}
        If $N = 2$, this is \cite[Lemma 2.3]{ASZ14}. If $N \geq 3$, we show $\nabla H^{\muhat_a}\in L^2(\R^N)$; the rest then follows as in \cite[Lemma 2.3]{ASZ14}.
        
        Write $\mu:=\muhat_a$ and
        $\mathcal F f(\xi)=\int f(x)\,e^{-2\pi i x\cdot\xi}\,dx$. Then, using $\mathcal F\gN(\xi)=|\Sph^{N-1}|(2\pi|\xi|)^{-2} \in L^1_{\mathrm{loc}}$ and $\mathcal F H^{\mu}=(\mathcal F\gN)\,\mathcal F\mu$, Plancherel gives
        \begin{align*}
        \int_{\R^N}|\nabla H^{\mu}|^2
        &=\int_{\R^N}(2\pi|\xi|)^2|\mathcal F\gN|^2|\mathcal F\mu|^2\,d\xi
        =|\Sph^{N-1}|\int_{\R^N}\mathcal F\gN\,|\mathcal F\mu|^2\,d\xi\\
        &=|\Sph^{N-1}|\!\iint\gN(x-y)\,d\mu\,d\mu
        \le|\Sph^{N-1}|\,\energy[\mu]<\infty,
        \end{align*}
        where in the last step we used $\mu \in \Cset_a$.
    \end{proof}

    Then, we need the following technical result.
    \begin{lemma}\label{l.compact}
    Let $N\ge2$ and $a\in\R$. Then,
    \begin{enumerate}
        \item[(i)] The minimizer $\muhat_a$ has compact support.
        \item[(ii)] The map $a\mapsto c_a$ is nondecreasing. In particular, $c_a\ge c_\star$ for every $a\in\R$.
    \end{enumerate}
\end{lemma}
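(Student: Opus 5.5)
For (i), I will compare the confining growth $|x|^2$ with the growth of the potential term at infinity, using the Frostman relations of Lemma~\ref{l.char}. Write $\mu=\muhat_a$. First I establish a uniform lower bound on $H^\mu$ far away.

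For $N\ge3$ this is immediate, since $H^\mu\ge0$. For $N=2$ I use $-\log|x-y|\ge -\log(|x|+|y|)$. Hence
\[
H^\mu(x)\ge -\log(1+|x|) - \int \log(1+|y|)\,d\mu(y),
\]
and the last integral is finite because $\int|y|^2\,d\mu<\infty$ (finite energy).

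Then on $\supp\mu$ the equality $2H^\mu+|x|^2=c_a$ holds q.e. Combined with the lower bound, this gives $|x|^2-2\log(1+|x|)\le C$ q.e. on $\supp\mu$, which forces $|x|\le R$ there. A set of zero capacity carries no $\mu$-mass, since $\mu$ has finite energy. So $\mu$-almost all of $\supp\mu$ lies in $\overline{B_R}$. Since the support is closed and every point of it has neighbourhoods of positive mass, $\supp\mu\subset\overline{B_R}$.

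For (ii), fix $a_1<a_2$ with minimizers $\mu_i$ and constants $c_i$. The plan is to integrate the Frostman relations against suitable measures and symmetrize.

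Since $\supp\mu_2\subset\half_{a_2}\subset\half_{a_1}$, and $\mu_2$ charges no zero-capacity sets, the inequality for $\mu_1$ gives
\[
\int (2H^{\mu_1}+|x|^2)\,d\mu_2\ge c_1 .
\]
The equality for $\mu_1$ on its own support gives
\[
\int(2H^{\mu_1}+|x|^2)\,d\mu_1=c_1 .
\]
Similarly, the equality for $\mu_2$ gives
\[
\int(2H^{\mu_2}+|x|^2)\,d\mu_2=c_2 .
\]
Subtracting the first from the last yields
\[
c_2-c_1\ge 2\int (H^{\mu_2}-H^{\mu_1})\,d\mu_2 .
\]
I need a second relation for $\int (2H^{\mu_2}+|x|^2)\,d\mu_1$, but $\mu_1$ may put mass outside $\half_{a_2}$, where no inequality for $\mu_2$ is available.

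Instead I would use $D(\nu):=\iint \gN(x-y)\,d\nu\,d\nu\ge0$ for the signed measure $\nu=\mu_2-\mu_1$. This is positive definiteness of the Coulomb kernel on neutral measures of finite energy, which follows from the Fourier representation in the proof of Lemma~\ref{l.obstacle}. Expanding,
\[
0\le D(\nu)=\int H^{\mu_2}\,d\mu_2-2\int H^{\mu_1}\,d\mu_2+\int H^{\mu_1}\,d\mu_1 .
\]
Now substitute the three Frostman identities: replace $2\int H^{\mu_2}\,d\mu_2$ by $c_2-\int|x|^2\,d\mu_2$, use $2\int H^{\mu_1}\,d\mu_2\ge c_1-\int|x|^2\,d\mu_2$, and replace $2\int H^{\mu_1}\,d\mu_1$ by $c_1-\int|x|^2\,d\mu_1$. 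This gives
\[
0\le 2D(\nu)\le c_2-c_1-\Bigl(\int|x|^2\,d\mu_1-\int |x|^2\,d\mu_2\Bigr)-\dots
\]
The remaining bookkeeping is delicate, so I expect the cleaner route to be a variational one.

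The variational route is to identify $c_a$ through the minimization problem itself. Testing the minimizer against mass variations shows that $c_a$ is the derivative of the energy under mass scaling. Concretely, minimize $\energy_m[\mu]$ over measures of mass $m$ supported in $\half_a$; then $c_a$ equals $\partial_m$ of this infimum at $m=1$, by the Lagrange-multiplier interpretation. Monotonicity of this derivative in $a$ is itself unclear, so I would fall back on a comparison principle for the obstacle problem of Lemma~\ref{l.obstacle}. The obstacle $\psi_a$ decreases as $a$ increases, since the constraint region shrinks and the obstacle is $-\infty$ outside it. With mass normalization, a smaller constraint region pushes the potential $\tfrac12(c_a-|x|^2)$ up on the contact set.

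Precisely: suppose $c_{a_2}<c_{a_1}$. Then $u:=H^{\mu_2}$ is an admissible competitor satisfying the $a_2$-obstacle. The minimal supersolution characterization (the least superharmonic majorant of the obstacle that is $O(\gN)$ at infinity with total mass $1$) gives $H^{\mu_1}\le$ the corresponding majorant for $\psi_{a_1}$ with constant $c_{a_2}$. The flux at infinity then gives mass $<1$, a contradiction.

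I expect this comparison step to be the main obstacle, and in particular making the mass–constant relation rigorous for $N=2$. Finally, $c_a\ge c_\star$ follows by taking $a_1\le-1$, where $c_{a_1}=c_\star$.
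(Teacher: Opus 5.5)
Part (i) is fine and is essentially the paper's argument. One small point: for $N=2$, $\int|y|^2\,d\mu<\infty$ is not automatic from $\energy[\mu]<\infty$, since the logarithmic term can be very negative. The paper obtains it from $-\log|x-y|\ge-2-\tfrac14(|x|^2+|y|^2)$, which gives $\energy[\mu]\ge-2+\tfrac12\int|x|^2\,d\mu$.

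Part (ii), however, is not proved. Your first route fails structurally, not just in the bookkeeping. Carried out, it gives only $c_2-c_1\ge\int|x|^2\,d\mu_1-\int|x|^2\,d\mu_2$, whose right-hand side has no definite sign (it is very negative for large $a_2$). The reason is that positive definiteness applied to $\mu_2-\mu_1$ would need control of $H^{\mu_2}$ on the part of $\supp\mu_1$ below $\hyper_{a_2}$, and you have none. In the comparison route, let $U$ be the least majorant of the $a_1$-obstacle with constant $c_{a_2}<c_{a_1}$. Your claim ``$H^{\mu_1}\le U$'' points the wrong way: lowering the constant lowers the least majorant, so $U\le H^{\mu_1}$. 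What is true is $H^{\mu_2}\le U$, and then everything rests on ``$U$ has mass $<1$''. That is, the mass of the least majorant would have to be \emph{strictly} increasing in the constant. This strict monotonicity is the whole content of the lemma, and you give no argument for it. Moreover, identifying $H^{\muhat_a}$ with a least superharmonic majorant already requires a domination principle. Building ``total mass $1$'' into the competitor class is also inconsistent with then reading the mass off the flux at infinity. Finally, $N=2$ is not treated.

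The missing idea is this. Under the contradiction hypothesis you already have, pointwise, $H^{\mu_2}\le H^{\mu_1}-\varepsilon$ $\mu_2$-a.e., with $\varepsilon=\tfrac12(c_1-c_2)>0$. It suffices to show this is impossible for two compactly supported probability measures of finite energy. The paper does this (for $N\ge3$) by applying positive definiteness not to $\mu_2-\mu_1$ but to $\mu_2-\lambda$, where $\lambda$ comes from \emph{truncating} $H^{\mu_1}$:

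\begin{itemize}
\item The superharmonic function $\min\{H^{\mu_2}+\varepsilon,H^{\mu_1}\}$ agrees with $H^{\mu_1}$ near infinity.
\item By Riesz decomposition and Liouville, it therefore equals $H^\lambda$ for a probability measure $\lambda$ of finite energy.
\item This $\lambda$ satisfies $H^\lambda=H^{\mu_2}+\varepsilon$ $\mu_2$-a.e.\ and $H^\lambda\le H^{\mu_2}+\varepsilon$ everywhere.
\end{itemize}

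Using $\lambda(\R^N)=\mu_2(\R^N)=1$, this gives
\[
\iint\gN(x-y)\,d(\mu_2-\lambda)(x)\,d(\mu_2-\lambda)(y)=-\int\bigl(H^{\mu_2}+\varepsilon-H^\lambda\bigr)\,d\lambda\le0,
\]
so $\lambda=\mu_2$. Then $\min\{H^{\mu_2}+\varepsilon,H^{\mu_1}\}=H^{\mu_2}$ forces $H^{\mu_1}=H^{\mu_2}$, contradicting $H^{\mu_2}\le H^{\mu_1}-\varepsilon$ on $\supp\mu_2$. For $N=2$ the paper invokes the logarithmic analogue (case II in the proof of Saff--Totik's Theorem 3.2). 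This single energy inequality replaces both your least-majorant machinery and its unproved strict mass monotonicity.
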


\begin{proof}
    \emph{(i)} We look at the Frostman relations \eqref{eq:frostman}. In dimension $N \geq 3$, $H^{\muhat_a} \geq 0$, so $|x|^2 \leq c_a$ in $\supp\muhat_a$ and hence $\supp\muhat_a$ is bounded.

    In dimension $N = 2$, first note that $|x-y| \leq (1+|x|)(1+|y|)$, and since
    $\log(1+t) \leq t \leq 1 + \tfrac{t^2}{4}$,
    \[
    -\log|x-y| \geq -\log(1+|x|) - \log(1+|y|) \geq -2 - \frac{|x|^2 + |y|^2}{4}.
    \]
    Integrating against $d\muhat_a(x)\,d\muhat_a(y)$ and using the fact that $\muhat_a$ is a
    probability measure,
    \[
    \energy[\muhat_a]
    = \iint -\log|x-y|\,d\muhat_a\,d\muhat_a + \int |x|^2\,d\muhat_a
    \geq -2 + \frac{1}{2}\int |x|^2\,d\muhat_a,
    \]
    and then $\int |x|^2\,d\muhat_a < \infty$. In particular
    $C := \int \log(1+|y|)\,d\muhat_a(y) \leq \int |y|\,d\muhat_a(y) < \infty$, and
    then
    $$H^{\muhat_a}(x) = \int -\log|x-y|\,d\muhat_a(y) \geq -\log(1+|x|) - \int \log(1+|y|)\,d\muhat_a(y) = -\log(1+|x|)-C.$$
    On $\supp\muhat_a$ we then have
    $c_a = |x|^2 + 2H^{\muhat_a}(x) \geq |x|^2 - 2\log(1+|x|) - 2C$, so $\supp\muhat_a$
    is bounded. Thus we have shown that $\muhat_a$ has compact support in any dimension $N\geq 2$.

    \medskip

    \emph{(ii)} Let $a<b$; we must show $c_a\le c_b$. Assume for contradiction that
    $$\varepsilon:=\tfrac12(c_a-c_b)>0.$$
    Since $a<b$, $\half_b\subseteq\half_a$, so $\supp\muhat_b\subseteq\half_b\subseteq\half_a$. On $\supp\muhat_b$, Lemma \ref{l.char} gives $2H^{\muhat_b}+|x|^2=c_b$, while $2H^{\muhat_a}+|x|^2\ge c_a$ q.e.\ on $\half_a$; subtracting,
    \begin{equation}\label{e.mono.ineq}
        H^{\muhat_b}\le H^{\muhat_a}-\varepsilon\qquad\muhat_b\text{-a.e.}
    \end{equation}
    For $N=2$, \eqref{e.mono.ineq} leads to a contradiction exactly as in case~II of the proof of \cite[Theorem 3.2]{ST97}. We explain how to modify the proof to treat $N \geq 3$.

    The function $\min\{H^{\muhat_b},H^{\muhat_a}-\varepsilon\}$ is superharmonic. By the Riesz decomposition theorem \cite[Theorem 1.22]{La72}, there is a nonnegative Borel measure $\lambda$ on $\R^N$ such that for every $r>0$ there is a function $h_r$ harmonic in $B_r$ with
    \begin{equation}\label{e.mono.riesz}
        \min\{H^{\muhat_b},H^{\muhat_a}-\varepsilon\}=h_r+H^{\lambda\!\,\restriction_{\!B_r}}\qquad\text{in }B_r.
    \end{equation}
    (This is where the assumption $N\geq 3$ enters. Also note that the measure $\lambda$ is independent of $r$, which is what the proof gives, even if this is not stated in the theorem.)
    
    By part (i), $\muhat_a$ and $\muhat_b$ have compact support, and then $H^{\muhat_a},H^{\muhat_b}\to0$ at infinity. Choose $R>0$ large enough so that $\supp\hat\mu_a$ is contained in $\overline{B_R}$ and that $\min\{ H^{\hat\mu_b},H^{\hat\mu_a}- \varepsilon\} = H^{\hat\mu_a}-\varepsilon$ in $\R^N\setminus \overline{B_R}$. Thus, for all $r> R$, we have
    $$
    H^{\hat\mu_a}-\varepsilon = h_r + H^{\lambda|_{B_r}}
    \qquad\text{in}\ B_r\setminus \overline{B_R} \,.
    $$
    Since the left side and $h_r$ are harmonic in $B_r \setminus\overline{B_R}$, so is $H^{\lambda|_{B_r}}$. This implies that $\lambda|_{B_r}$ vanishes in $B_r \setminus\overline{B_R}$. Since $r> R$ is arbitrary, we conclude that $\supp\lambda \subset\overline{B_R}$.
    
    Thus, $H^{\lambda|_{B_r}} = H^\lambda$ for all $r> R$. It follows from \eqref{e.mono.riesz} that there is a harmonic function $h$ on $\R^N$ such that
    $$
    \min\{ H^{\hat\mu_b},H^{\hat\mu_a}- \varepsilon\} = h + H^{\lambda}
    \qquad\text{in}\ \R^N \,.
    $$
    We let $|x|\to\infty$ in this equation. Since the left side tends to $-\varepsilon$ and $H^\lambda$ tends to zero, we conclude that $h(x)\to-\varepsilon$ as $|x|\to\infty$. Therefore, by Liouville's theorem, $h\equiv -\varepsilon$. Thus, we obtain
    \begin{equation}
    	\label{e.mono.hlambda}
    	\min\{ H^{\hat\mu_b},H^{\hat\mu_a}- \varepsilon\} = H^{\lambda} - \varepsilon
    	\qquad\text{in}\ \R^N \,.
    \end{equation}  
    In particular $H^{\muhat_a}=H^\lambda$ in $\R^N\setminus\overline{B_R}$, and comparing the coefficients of $|x|^{-(N-2)}$ as $|x|\to\infty$ we obtain
    \begin{equation}\label{e.mono.mass}
        \lambda(\R^N)=\muhat_a(\R^N)=1.
    \end{equation}
    Writing $\|\rho\|^2:=\iint\gN(x-y)\,d\rho(x)\,d\rho(y)$ for the (suitably normalized) Coulomb norm of $\rho$, we first verify that $\|\lambda\|<\infty$. Indeed, using \eqref{e.mono.hlambda}, \eqref{e.mono.mass} and \eqref{e.mono.ineq}, we find
    \begin{align*}
        \|\lambda\|^2 & = \int H^{\lambda}\,d\lambda \\
        & = \int \min\{ H^{\hat\mu_b} + \varepsilon,H^{\hat\mu_a} \} \,d\lambda \\
        & \leq \int H^{\hat\mu_b} \,d\lambda + \varepsilon \\
        & = \int H^{\lambda} \,d\muhat_b + \varepsilon \\
        & = \int (H^{\muhat_b}+\varepsilon) \,d\muhat_b + \varepsilon \\
        & = \|\muhat_b\|^2 + 2\varepsilon <\infty \,.
    \end{align*}
    Here in the next to last equality we used
    \begin{equation}
        \label{e.mono.hlambdaineq}
        H^{\muhat_b} = \min\{H^{\muhat_b},H^{\muhat_a}-\varepsilon\} \quad \muhat_b\text{-a.e.},
    \end{equation}
    from \eqref{e.mono.ineq} and \eqref{e.mono.hlambda}.
    
    Next, using \eqref{e.mono.hlambda} and \eqref{e.mono.mass}, we compute
    \begin{align*}
        \|\muhat_b-\lambda\|^2
        &=\int\bigl(H^{\muhat_b}-H^\lambda\bigr)\,d(\muhat_b-\lambda)\\
        &=\int\bigl(H^{\muhat_b}-\min\{H^{\muhat_b},H^{\muhat_a}-\varepsilon\}-\varepsilon\bigr)\,d(\muhat_b-\lambda)\\
        &=\int\bigl(H^{\muhat_b}-\min\{H^{\muhat_b},H^{\muhat_a}-\varepsilon\}\bigr)\,d(\muhat_b-\lambda)\\
        &=-\int\bigl(H^{\muhat_b}-\min\{H^{\muhat_b},H^{\muhat_a}-\varepsilon\}\bigr)\,d\lambda\ \le\ 0,
    \end{align*}
    where in the last equality we used again \eqref{e.mono.hlambdaineq}. 
    
    The inequality $\| \hat\mu_b - \lambda \|^2\leq 0$ implies that $\lambda = \hat\mu_b$. Reinserting this into \eqref{e.mono.hlambda} gives
    $$
    \min\{ H^{\hat\mu_b},H^{\hat\mu_a}- \varepsilon\} = H^{\hat\mu_b} - \varepsilon
    \qquad\text{in}\ \R^N,
    $$
    which implies
    $$
    H^{\hat\mu_a} \leq H^{\hat\mu_b} \,,
    $$
    contradicting $\varepsilon > 0$. Thus $a\mapsto c_a$ is nondecreasing, and in particular, since $c_a = c_\star$ for $a \leq -1$, $c_a \geq c_\star$ for all $a \in \R$.
\end{proof}

    Finally, we prove our first main result. The proof of items (a)--(b) follows \cite[Theorem 1]{ASZ14} closely; the only change is that the obstacle equals $-\infty$ on $\{x_N < a\}$. The proof of items (c)--(d) is a simplification of the corresponding argument in \cite{ASZ14}, that now also needs the bound $c_a \geq c_\star$ from Lemma \ref{l.compact}(ii) because the maximum principle on the unbounded components requires control of the function at infinity for $N \geq 3$.
	\begin{proof}[Proof of Theorem~\ref{t.structure}]
        For $N = 1$ the proof is a computation that follows from $\ac = 1$ and
        \[
        d\muhat_a =
        \begin{cases}
            \tfrac{1}{2}\chi_{(-1,1)}\,dx, & a \leq -1,\\[3pt]
            \tfrac{1}{2}\chi_{(a,1)}\,dx + \tfrac{a+1}{2}\,\delta_a, & -1<a<1,\\[3pt]
            \delta_a, & a \geq 1,
        \end{cases}
        \]
        which can be checked by elementary means (cf. \cite{DKMSS17}).
    
        Now we focus on $N \geq 2$. First, Lemma \ref{l.obstacle} implies that $H^{\muhat_a}$ is a weak solution of the obstacle problem with obstacle $\psi_a$. By classical regularity results (see for instance \cite[Chapter 5]{FR22}), $H^{\muhat_a} \in C^{1,1}_{\mathrm{loc}}(\R^N\setminus\hyper_a)$. Moreover, $H^{\muhat_a}$ is continuous across the hyperplane $\hyper_a$ \cite{FM82}. Therefore,
        \begin{equation}
            \label{eq:frostman}
            2H^{\muhat_a} + |x|^2 = c_a \text{ in } \supp\muhat_a \quad \text{and} \quad 2H^{\muhat_a}+|x|^2 \geq c_a \text{ in } \half_a.
        \end{equation}
        
        By the definition of the potential,
        $$d\muhat_a = -\frac{1}{|\Sph^{N-1}|}\Delta H^{\muhat_a} = \frac{1}{|B_1|}\chi_{\Omega_a}\,dx \quad \text{in} \ \R^N\setminus\hyper_a.$$
        Indeed, this is clear in the open sets $\Omega_a$ and $\R^N\setminus\Omega_a$, but in fact it holds a.e.~in $\R^N\setminus L_a$. Indeed, the function $2H^{\muhat_a} + |x|^2$ is twice weakly differentiable, so its Laplacian vanishes a.e.~on the set $\{2H^{\muhat_a} + |x|^2 = c_a\}$; see, e.g., \cite{FL18}. We also use the fact that, since $H^{\muhat_a} \in C^{1,1}_{\mathrm{loc}}(\R^N\setminus\hyper_a)$, the distributional Laplacian of $H^{\muhat_a}$, restricted to $\R^N\setminus L_a$, belongs to $L^\infty_{\mathrm{loc}}(\R^N\setminus L_a)$. 

        We write $d\muhat_{S,\kern0.2pt a} := d\muhat_a - \tfrac{1}{|B_1|}\chi_{\Omega_a}\,dx$, which is supported on $\hyper_a$.
              
        Then, we prove that $H^{\muhat_a}$ is locally Lipschitz across $\hyper_a$. To show this, let $x_0 \in \hyper_a$. Since by Lemma \ref{l.compact}(i), $\supp\muhat_a \subset B_R$, we have that for $N=2$,
        $$H^{\muhat_a} = \gN * \muhat_a \geq -\log(|x|+R),$$
        and $H^{\muhat_a} \geq 0$ if $N \geq 3$. In both cases, we can replace the obstacle by
        $$\tilde\psi_a(x)=
			\begin{cases}
				\tfrac12\bigl(c_a-|x|^2\bigr), & x\in\half_a,\\[2pt]
				h(x), & x\in B_1(x_0)\setminus\half_a,
			\end{cases}  
        $$
        where $h$ is the solution to
        $$
        \left\{
        \begin{array}{rclll}
             \Delta h & = & 0 & \text{in} & B_1(x_0)\setminus\half_a\\
             h & = & \tfrac12\bigl(c_a-|x|^2\bigr) & \text{on} & B_1(x_0)\cap\hyper_a\\
             h & = & \!-\log(|x|+R) & \text{on} & \p B_1(x_0)\setminus\half_a,
        \end{array}
        \right.
        $$
        and $H^{\muhat_a}$ is still a solution to the obstacle problem with the obstacle $\tilde\psi_a$ because $H^{\muhat_a} > \tilde\psi_a$ on $\R^N\setminus\half_a$. Then, since $\tilde\psi_a \in C^{0,1}(B_1(x_0))$, $H^{\muhat_a} \in C^{0,1}(B_{1/2}(x_0))$ by \cite[Theorem 2(a)]{Caf98}.

        Now, $H^{\muhat_a}$ is a solution to
        $$-\Delta H^{\muhat_a} = N\chi_{\Omega_a} \quad \text{in} \ \R^N\setminus\hyper_a.$$
        Since $-\Delta H^{\muhat_a}\in L^\infty$ on each open side of $\hyper_a$ and the boundary datum on $\hyper_a$ is Lipschitz, $H^{\muhat_a}$ has one-sided Neumann traces
        $$
        T^{\pm} := \p_{x_N}\bigl(H^{\muhat_a}|_{\half_a}\bigr), \
        \p_{x_N}\bigl(H^{\muhat_a}|_{\R^N\setminus\half_a}\bigr) \in L^2_{\mathrm{loc}}(\hyper_a).
        $$

        We will prove that $T^{\pm}$ are bounded: Let $L$ be the Lipschitz constant of $H^{\muhat_a}$ on a neighborhood of $\hyper_a\cap\supp\muhat_a$. Since $H^{\muhat_a}\in C^{1,1}_{\mathrm{loc}}(\R^N\setminus\hyper_a)$, the gradient $\nabla H^{\muhat_a}$ is continuous on each open side of $\hyper_a$, so the a.e.\ bound $|\nabla H^{\muhat_a}|\le L$ holds everywhere there, and in particular the interior slices $\p_{x_N}H^{\muhat_a}(\cdot,a\pm\varepsilon)$ are bounded by $L$ uniformly in $\varepsilon>0$. Any weak-$*$ limit point of these slices as $\varepsilon\downarrow0$ solves the same Green identity that defines $T^{\pm}$, using that $\nabla H^{\muhat_a}$ and $\Delta H^{\muhat_a}$ are bounded up to $\hyper_a$ on each open side; hence it equals $T^{\pm}$, and the slices converge weakly-$*$ to $T^{\pm}$. By weak lower semicontinuity of the norm, $\|T^{\pm}\|_{L^\infty(\hyper_a)}\le L$.        

        Moreover, integrating by parts: for $\zeta\in C_c^\infty(\R^N)$, using
        $-\Delta H^{\muhat_a}=|\Sph^{N-1}|\,\muhat_a$,
        \begin{align*}
            |\Sph^{N-1}|\!\int_{\R^N}\!\zeta\,d\muhat_a
            &= \int_{\R^N}\!\nabla H^{\muhat_a}\cdot\nabla\zeta\,dx\\
            &= |\Sph^{N-1}|\!\int_{\Omega_a}\!\zeta\,\frac{dx}{|B_1|}
            + \int_{\hyper_a}\!\zeta\Bigl(\p_{x_N}\bigl(H^{\muhat_a}|_{\R^N\setminus\half_a}\bigr) -\p_{x_N}\bigl(H^{\muhat_a}|_{\half_a}\bigr)
             \Bigr)d\mathcal H^{N-1}.
        \end{align*}
        Hence $\muhat_{S,\kern0.2pt a}=g\,\mathcal H^{N-1}\!\restriction_{\hyper_a}$ with
        \[
            g := \frac{1}{|\Sph^{N-1}|}\Bigl(\p_{x_N}\bigl(H^{\muhat_a}|_{\R^N\setminus\half_a}\bigr) - \p_{x_N}\bigl(H^{\muhat_a}|_{\half_a}\bigr)\Bigr)\in L^\infty(\hyper_a),
            \qquad g\geq 0.
        \]

        To prove (c), let $a_1 < a_2$ and define
        $$w := H^{\muhat_{a_1}} - H^{\muhat_{a_2}} - \tfrac12\bigl(c_{a_1} - c_{a_2}\bigr).$$
        First, note that
        $$-\Delta w \geq 0 \text{ in } \R^N\setminus\supp\muhat_{a_2} \quad \text{and} \quad -\Delta w \leq 0 \text{ in } \R^N\setminus \supp\muhat_{a_1}.$$

        Moreover, $H^{\muhat_{a_1}} \geq \tfrac12\bigl(c_{a_1}-|x|^2\bigr)$ and $H^{\muhat_{a_2}} = \tfrac12\bigl(c_{a_2}-|x|^2\bigr)$ in $\supp\muhat_{a_2}$, and from the definition of the potential and the supports of both $\muhat_{a_1}$ and $\muhat_{a_2}$ being compact, $H^{\muhat_{a_1}} - H^{\muhat_{a_2}} \rightarrow 0$ at infinity, which implies $w \rightarrow \tfrac12\bigl(c_{a_2} - c_{a_1}\bigr) \geq 0$ by Lemma \ref{l.compact}(ii). Hence, by the comparison principle, $w \geq 0$.

        From $w \geq 0$, $H^{\muhat_{a_2}} \leq H^{\muhat_{a_1}} - \tfrac12\bigl(c_{a_1}-c_{a_2}\bigr)$, so $H^{\muhat_{a_2}} = \tfrac12(c_a-|x|^2)$ in $\supp\muhat_{a_1}\cap\half_{a_2}$, then $\Omega_{a_1}\cap\{x_N > a_2\} \subseteq \supp\muhat_{a_2}$, and then $\Omega_{a_1} \cap\{x_N > a_2\} \subseteq \Omega_{a_2}$. Finally, if $a \leq -1$, $\Omega_a = B_1$, and if $a \in (-1,1)$, considering the particular case $a_1 = -1, a_2 = a$ gives $\Omega_a \neq \emptyset$.

        Finally, to prove (d), set $w$ as in (c) with $a_1 = -1$, $a_2 = a$, so that $w = H^{\mustar} - H^{\muhat_a} - \tfrac12(c_\star - c_a)$. First note that
        $$\supp\muhat_a\cap\{w = 0\} \subseteq \overline{B_1}\cap\half_a \subseteq \{w = 0\}.$$
        Indeed, if $x \in \supp\muhat_a\cap\{w = 0\}$, 
        $$H^{\muhat_a}(x) = \tfrac12(c_a-|x|^2) \ \Rightarrow\ H^{\mustar}(x) = \tfrac12(c_\star-|x|^2) \ \Rightarrow\ x \in \overline{B_1},$$
        which together with $\supp\muhat_a \subset \half_a$ proves the first inclusion. Furthermore, $\overline{B_1}\cap\half_a \subseteq \supp\muhat_a$ implies $H^{\muhat_a}(x) = \tfrac12(c_a-|x|^2)$ in $\overline{B_1}\cap\half_a$, and hence $w = 0$ in $\overline{B_1}\cap\half_a$.
      
        Now, suppose that there is $x \notin \supp\muhat_a$ with $w(x) = 0$. Then, by the strong maximum principle, $w \equiv 0$ in $U$, the connected component of $\R^N\setminus\supp\muhat_a$ containing $x$. Now, if $U$ is bounded, $U \subset \half_a$, $H^{\muhat_a} = \tfrac12\bigl(c_a-|x|^2\bigr)$ on $\p U$, and $\Delta H^{\muhat_a} = 0$ in $U$ but $H^{\muhat_a} \geq \tfrac12\bigl(c_a-|x|^2\bigr)$, a contradiction with $\tfrac12\bigl(c_a-|x|^2\bigr)$ being strictly superharmonic. On the other hand, if $U$ is unbounded, since $\supp\muhat_a$ is bounded, $U$ contains $\R^N\setminus\half_a$, in particular $B_1\setminus\half_a$, and in the latter set $-\Delta w = -\Delta H^{\mustar} = N$, contradicting that $w = 0$. Hence, $\{w = 0\} \subset \supp\muhat_a$, and in conclusion $\{w = 0\} = \overline{B_1}\cap\half_a$.

        Finally, assume that there exists $x_0 \in B_1\cap\hyper_a$ but $x_0 \notin \supp\muhat_{S,\kern0.2pt a}$, and choose $\rho > 0$ such that $B_\rho(x_0) \subset B_1\setminus\supp\muhat_{S,\kern0.2pt a}$. Then, $\nabla w$ is continuous at $x_0$, and since $w = 0$ on $\overline{B_1}\cap\half_a$, $\nabla w (x_0) = 0$. On the other hand, $w > 0$ and $-\Delta w \geq 0$ in $B_\rho(x_0)\cap\{x_N < a\}$, so by the Hopf lemma $\p_{x_N}w(x_0) < 0$, a contradiction.
	\end{proof}

\section{Proof of \texorpdfstring{\cite[Conjecture 2.8]{BFMS26}}{[BFMS26, Conjecture 2.8]}}\label{s.conjecture}
    For a fixed $a\in\R$, we write $\muW := \nuW \otimes\delta_a$ and $H^{\muW} := \gN * \muW$. The goal of this section is to prove the following:
	\begin{proposition}\label{p.threshold}
		Let $N \geq 3$ and $a\geq a_{\rm c}$. Then $\muW$ is the unique minimizer attaining~$I(a)$.
	\end{proposition}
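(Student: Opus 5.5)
The plan is to verify the Frostman relations of Lemma~\ref{l.char} directly for $\muW$. Since the minimizer is unique, this shows $\muW=\muhat_a$. Write $\rho(x'):=d\nuW/dx'$, supported on the disk $D:=\{|x'|\le\RW\}\subset\hyper_a$, and set
$$u(x):=2H^{\muW}(x)+|x|^2-c$$
for a constant $c$ to be chosen. We must show $u=0$ on $D\times\{a\}$ and $u\ge0$ on $\half_a$.

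\emph{Step 1: the restriction to the hyperplane.} On $\hyper_a$ we have $H^{\muW}(x',a)=\frac{1}{N-2}\int|x'-y'|^{-(N-2)}\rho(y')\,dy'$. This is a Riesz potential in dimension $d=N-1\ge2$ with exponent $s=d-1$, which is exactly the case where the equilibrium measure in a quadratic field is the "semicircle-type" density $\sqrt{(1-|x'|^2/\RW^2)_+}$. I would verify the two lower-dimensional Frostman relations by the classical explicit computation of Riesz potentials of $(R^2-|y'|^2)_+^{\beta}$ (Dyda-type or Riesz formulas, here with $\beta=1/2$):
\begin{itemize}
\item[(i)] $2H^{\muW}(x',a)+|x'|^2$ is constant on $D$;
\item[(ii)] this expression is $\ge$ that constant for $|x'|>\RW$.
\end{itemize}
Relation (ii) follows from monotonicity in $|x'|$ of the explicit hypergeometric expression. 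The normalization in \eqref{e.nuw} fixes both the total mass $1$ and the quadratic coefficient, which pins down $\RW$. Choosing $c$ equal to this constant plus $a^2$ gives $u=0$ on $D\times\{a\}$ and $u\ge0$ on all of $\hyper_a$.

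\emph{Step 2: propagation into $\{x_N>a\}$.} Set $v:=\p_{x_N}u=2\p_{x_N}H^{\muW}+2x_N$.
\begin{itemize}
\item $v$ is harmonic in $\{x_N>a\}$.
\item By the reflection symmetry of $H^{\muW}$ across $\hyper_a$ and the jump relation $-\Delta H^{\muW}=|\Sph^{N-1}|\,\rho\,\Ham^{N-1}\!\restriction_{\hyper_a}$, the one-sided normal derivative from above is $\p_{x_N}H^{\muW}(x',a^+)=-\tfrac12|\Sph^{N-1}|\rho(x')$. Hence $v(x',a^+)=2a-|\Sph^{N-1}|\rho(x')$.
\item Since $\rho$ is maximal at $x'=0$, the boundary values are nonnegative precisely when $2a\ge|\Sph^{N-1}|\rho(0)=|\Sph^{N-1}|\,\frac{2\RW\Gamma(\frac{N+1}{2})}{\pi^{(N+1)/2}}$.
\end{itemize}
I expect a short Gamma-function computation to show that this threshold is exactly $a\ge\ac$ from \eqref{e.ac.RW}. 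This is the conceptual reason for the critical value.

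For the interior, $\p_{x_N}H^{\muW}$ is bounded away from $\hyper_a$ and decays like $|x|^{-(N-1)}$, while $2x_N\ge 2a$. So $v$ is bounded below at infinity, and its liminf at infinity is $\ge0$ (in fact $v\to+\infty$ as $x_N\to\infty$; near infinity along directions with $x_N$ bounded, $v\approx 2x_N\ge2a>0$). The maximum principle for harmonic functions in the halfspace, applied to $v$ with nonnegative boundary data (a Phragmén–Lindelöf-type argument is not even needed, since $v$ is bounded below), gives $v\ge0$. Therefore $t\mapsto u(x',a+t)$ is nondecreasing, and $u(x',a+t)\ge u(x',a)\ge0$ for $t>0$.

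\emph{Step 3: conclusion and main obstacle.} Steps 1 and 2 give both relations in \eqref{e.frostman}, and $\energy[\muW]<\infty$ since $\rho$ is bounded and $N-2<N-1$. Lemma~\ref{l.char} then yields $\muW=\muhat_a$, and uniqueness is the uniqueness of $\muhat_a$. The main obstacle is Step 1: the explicit Riesz-potential computation on and off the disk. The exterior inequality (ii) is the delicate part. If the closed form is unwieldy, I would instead note that $\nuW$ is the known minimizer of the $(N-1)$-dimensional Riesz problem with quadratic field, which gives (i) and (ii) at once. Then only the mass and radius normalizations and the identity $|\Sph^{N-1}|\rho(0)=2\ac$ need checking.
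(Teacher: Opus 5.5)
Your argument is correct, and in the propagation step it follows a genuinely different route from the paper's.

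\textbf{The paper's route.} The paper splits the inequality in $\{x_N>a\}$ into two monotonicity statements.
\begin{enumerate}
\item[(1)] Monotonicity in $t$, but only along the axis $x'=0$ (Lemma~\ref{l.normal-field}: $A(t)=-\p_tH^{\muW}(0,a+t)$ decreases from $\ac$).
\item[(2)] Monotonicity in $r=|x'|$ at every height (Lemma~\ref{mono}). This needs the hypergeometric computation of $\p_rG(r,0)$ and the lift of $r^{-1}\p_rG$ to a harmonic function on $\R^{N+1}\times\R_+$.
\end{enumerate}

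\textbf{Your route.} You instead prove monotonicity in $x_N$ above every $x'$. This needs only two inputs. The first is the Frostman relations on all of $\hyper_a$ (Lemma~\ref{l.hyperplane}, which the paper also takes from \cite{FM25}); the inequality (ii) suffices, and monotonicity in $r$ is not needed. The second is the identity $|\Sph^{N-1}|\rho(0)=2\ac$. This checks out and is the same fact as $\lim_{t\downarrow0}A(t)=\ac$.

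\textbf{A shorter Step 2.} You can make Step 2 shorter and fully rigorous. With the halfspace Poisson kernel $P_t(z')=2t\,|\Sph^{N-1}|^{-1}(|z'|^2+t^2)^{-N/2}$ one has
\[
v(x',a+t)=2(a+t)-|\Sph^{N-1}|\,(P_t*\rho)(x').
\]
Since $P_t$ is a probability density and $\rho\le\rho(0)$, this gives $v\ge2(a+t-\ac)\ge0$ with no maximum principle at all. The same formula gives the continuity of $v$ up to $\hyper_a$, which your maximum-principle version uses without stating it.

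\textbf{A small correction.} Your parenthetical remark is slightly off. Boundedness from below is not what lets you apply the maximum principle on the halfspace. What you actually use is $\liminf_{|x|\to\infty}v\ge2a>0$, uniformly up to $\hyper_a$ because $|\p_{x_N}H^{\muW}(x)|\le\mathrm{dist}(x,\supp\muW)^{-(N-1)}$. That is enough.

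\textbf{Comparison.} The paper's route yields extra information that the proposition does not require: radial monotonicity of $G$ at all heights and strict monotonicity of $A$. Your route is shorter and avoids the $(N+1)$-dimensional lift. It also explains $\ac$ directly as
\[
\sup_{\{x_N>a\}}\bigl(-\p_{x_N}H^{\muW}\bigr)=\tfrac12|\Sph^{N-1}|\max\rho .
\]
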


    \begin{remark}\label{r.previous}
    The conjecture was already known in the cases $N\in\{1,2,4\}$. For $N=1$, Proposition \ref{p.threshold} is folklore and known in the physics literature \cite{DKMSS17}. In $\R^2$, $\muW$ is the semicircle law, shown to be the minimizer for $a\ge\ac=\sqrt2$ in \cite[Proposition 3.1]{ASZ14}. Finally, the conjecture was proved in $\R^4$ in \cite[Appendix A]{BFMS26}. Proposition~\ref{p.threshold} recovers the latter as part of
    an argument valid for all $N\ge3$. Also, our proof can be adapted to treat the cases $N=1,2$. (For $N=2$ one needs to `differentiate' several identities for $|x|^{-N+2}$ at $N=2$. This is carried out in detail in \cite{FM25} in the context of a related, but different question and we refrain from carrying out the details since the final result is already known from \cite{ASZ14}.)
    \end{remark}

    We first recall the Euler--Lagrange relations of the auxiliary $(N-1)$-dimensional problem.
    \begin{lemma}\label{l.hyperplane}
	The generalized Wigner law $\nuW$ is the unique minimizer attaining
	$$
	\inf\{ \energy[\nu\otimes\delta_a] :\ \nu\in \Prob(\R^{N-1}) \} \,,
	$$
	and there is a constant $c'\in\R$ such that
	\begin{equation}
		\label{eq:eln-1}
		\begin{cases}
			2H^{\muW}(x',a) + |x'|^2 = c' & \text{on}\ \{ |x'|\leq \RW\} \,, \\
			2H^{\muW}(x',a) + |x'|^2 \geq c' & \text{on}\ \R^{N-1} .
		\end{cases}
	\end{equation}
\end{lemma}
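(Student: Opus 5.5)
The plan is to treat the problem on $\hyper_a\cong\R^{N-1}$ as a Riesz-type equilibrium problem and to verify $\nuW$ through its Frostman relations. For $\nu\in\Prob(\R^{N-1})$,
$$
\energy[\nu\otimes\delta_a]=\iint \tfrac{1}{N-2}|x'-y'|^{-(N-2)}\,d\nu\,d\nu+\int|x'|^2\,d\nu+a^2 .
$$
This is the energy of the Riesz kernel $|x'|^{-s}$ in dimension $d=N-1$ with $s=d-1$, plus a quadratic field. The Fourier transform of this kernel in $\R^{N-1}$ is a positive multiple of $|\xi|^{-1}$, so the interaction is positive definite on signed measures of finite energy. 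Hence the functional is strictly convex, lower semicontinuous and coercive, and a unique minimizer exists, exactly as for $\muhat_a$.

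The same variational argument as in Lemma \ref{l.char}, now carried out in $\R^{N-1}$ (cf. \cite{La72}), shows that a measure $\nu$ of finite energy is this minimizer if and only if there is a constant $c'$ satisfying \eqref{eq:eln-1}, with $\{|x'|\le \RW\}$ replaced by $\supp\nu$. So it suffices to show that $\nu=\nuW$ satisfies \eqref{eq:eln-1}.

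\emph{Equality on the disk.} I would obtain $\nuW$ as a collapsed homogeneous ellipsoid. Take the solid ellipsoid $E_\varepsilon=\{|x'|^2/R^2+(x_N-a)^2/\varepsilon^2\le1\}$ with uniform density normalized to total mass one. Its Newtonian potential $H$ is a quadratic polynomial inside $E_\varepsilon$ (Newton--MacLaurin). As $\varepsilon\to0$:
\begin{itemize}
\item[$\cdot$] the measures converge weakly to the projection onto $\hyper_a$, whose density is proportional to $\sqrt{(1-|x'|^2/R^2)_+}$;
\item[$\cdot$] the potentials converge locally uniformly, since the kernel is locally integrable on $(N-1)$-dimensional sets when $N\ge3$.
\end{itemize}
Therefore $H^{\nu\otimes\delta_a}(x',a)=A-B|x'|^2$ on $\{|x'|\le R\}$. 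The constant $B$ is explicit from the ellipsoid coefficient formula in the degenerate limit, and by scaling $B$ is proportional to $R^{-N}$. Imposing $B=\tfrac12$ fixes $R=\RW$, and total mass one fixes the prefactor in \eqref{e.nuw}. Checking that these Gamma-function constants match is routine bookkeeping. An equivalent route is to note that $|x'|^{-(N-2)}$ is, up to a constant, the fundamental solution of $(-\Delta')^{1/2}$ on $\R^{N-1}$, and to use the classical identity $(-\Delta')^{1/2}(1-|x'|^2)_+^{1/2}=\text{const}$ on the unit ball.

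\emph{Inequality outside the disk (main obstacle).} I expect the inequality for $|x'|>\RW$ to be the hardest step. The function $u(r)=2H^{\muW}(x',a)+r^2-c'$, with $r=|x'|$, is radial and vanishes on $[0,\RW]$. It is continuous, and in fact $C^1$ across $r=\RW$, because the density vanishes like a square root at the edge. So it suffices to show $u'(r)\ge0$ for $r>\RW$.

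For this I would use the exterior potential of the (degenerate) ellipsoid, given by Ivory's formula
$$
H(x)=C\int_{\lambda(x)}^\infty\Bigl(1-\tfrac{|x'|^2}{R^2+t}-\tfrac{(x_N-a)^2}{t}\Bigr)\frac{dt}{(R^2+t)^{(N-1)/2}\,t^{1/2}},
$$
where $\lambda(x)$ is the ellipsoidal coordinate. On the plane $x_N=a$ we have $\lambda=r^2-R^2$. Differentiating,
$$
\partial_r H=-C\,2r\int_{r^2-R^2}^\infty \frac{dt}{(R^2+t)^{(N+1)/2}t^{1/2}},
$$
because the boundary term vanishes. Inside the disk the lower limit is $0$, and the condition $B=\tfrac12$ says precisely that $2\partial_rH+2r=0$ there. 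For $r>\RW$ the integral runs over a strictly smaller range, so $|\partial_r H|$ is smaller than in the interior case. Hence $u'(r)=2\partial_rH+2r>0$ for $r>\RW$, which gives the inequality in \eqref{eq:eln-1}.

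If the Ivory formula is awkward to justify in the degenerate limit, a fallback is the known classification of Riesz $s=d-1$ equilibrium measures under quadratic fields (Dyda; Chafa\"i--Saff--Womersley), which yields both relations directly.
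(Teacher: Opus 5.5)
Your proof is correct, but it takes a different route from the paper.

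\textbf{The paper's route.} The paper proves this lemma by citation. Restricted to $\hyper_a$, the problem is the attractive--repulsive problem of \cite[Theorem~2]{FM25} with $d=N-1$, $\alpha=2$, $\beta=2-N$. The quadratic pair term equals the external field up to the center-of-mass term, which is removed by translation invariance. Then \eqref{eq:eln-1} is simply the Euler--Lagrange relation of that known minimizer.

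\textbf{Your route.} You verify the Frostman relations directly, via Newton's theorem for homogeneous ellipsoids and Ivory's exterior formula in the flattening limit. The limit is easy to justify as you indicate:
\begin{itemize}
\item[$\cdot$] On the plane, $\gN(x'-y',\varepsilon s)\le\gN(x'-y',0)$ gives dominated convergence of the potentials.
\item[$\cdot$] For $|x'|>R$, the confocal parameter equals $r^2-R^2$ for every $\varepsilon$, so the exterior formula passes to the limit by monotone convergence.
\end{itemize}
Your ``routine bookkeeping'' also checks out. With the paper's normalization of $\gN$ one has $C=N/4$, and
$B=\frac N4\int_0^\infty (R^2+t)^{-\frac{N+1}{2}}t^{-\frac12}\,dt=\frac N4 R^{-N}\sqrt\pi\,\Gamma(\frac N2)/\Gamma(\frac{N+1}{2})$.
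Hence $B=\frac12$ holds exactly when $R=\RW$.

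\textbf{Comparison.} Your route is self-contained. It also gives a much shorter proof of the exterior inequality: the shrinking range of integration in $\partial_r H$ gives $u'(r)>0$ for $r>\RW$ immediately. That is precisely the content of Step~1 of Lemma~\ref{mono}, where the paper instead uses the hypergeometric representation from \cite[Lemma~9]{FM25} and Gauss's summation to show $M(1)=1$. Your claim that $u$ is $C^1$ at $r=\RW$ is not needed; continuity of $u$ there suffices. The paper's citation, for its part, costs nothing and covers all dimensions at once.

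\textbf{Dimension $N=2$.} The lemma is also invoked for $N=2$ in the proof of Theorem~\ref{t.classification}. There your $\frac1{N-2}$ normalization does not apply. The same collapsing-ellipse argument with the logarithmic kernel gives the semicircle law classically, so this is only a point of scope.
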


\begin{proof}
    On $\hyper_a$ the kernel $\gN$ restricts to the Riesz kernel of order $N-2$ on $\R^{N-1}$, so $\nuW$ is the unique minimizer of \cite[Theorem~2]{FM25} with $d = N-1$, $\alpha = 2$ and $\beta = 2-N$\footnote{Note that the energy in \cite{FM25} is a pure pair interaction,
$E_{\alpha,\beta}[\mu]=\tfrac12\iint\bigl(\tfrac{1}{\alpha}|x-y|^{\alpha}-\tfrac{1}{\beta}|x-y|^{\beta}\bigr)\,d\mu(x)\,d\mu(y)$,
but for $\alpha=2$ its quadratic term is proportional to our external field up to the
center-of-mass term $|\!\int x'\,d\nu|^2$. By translation invariance of $E_{\alpha,\beta}$, the latter can be assumed to vanish.}, and \eqref{eq:eln-1} is its Euler--Lagrange relation; see also \cite{CV11}.
\end{proof}

The following lemma is the key ingredient in the proof of the proposition. Since $\nuW$ is rotation invariant, $H^{\muW}(x',x_N)$ depends on $x'$ only through $|x'|$. Define
\[
G(r,t):=2H^{\muW}(x',a+t)+r^2,
\qquad r=|x'|,\ t\ge0.
\]

\begin{lemma}\label{mono}
For all $r,t\ge0$,
\[
\partial_r G(r,t)\ge0.
\]
\end{lemma}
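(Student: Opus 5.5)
The plan is a maximum principle argument for the horizontal derivatives of $G$ in the open upper halfspace $\{x_N>a\}$. There $\muW$ has no mass, so $H^{\muW}$ is harmonic. Hence for each $i\le N-1$ the function
\[
f_i(x):=\partial_{x_i}\bigl(2H^{\muW}(x)+|x'|^2\bigr)=2\,\partial_{x_i}H^{\muW}(x)+2x_i
\]
is harmonic in $\{x_N>a\}$, because $x_i$ is linear. By rotation invariance of $\nuW$ it suffices to treat $i=1$ and to evaluate at $x'=re_1$, where $f_1(re_1,a+t)=\partial_rG(r,t)$. So the claim reduces to showing $f_1\ge0$ on the quarter space
\[
Q:=\{x_1>0,\ x_N>a\}.
\]
At the edges $r=0$ or $t=0$ the claim then follows by continuity, using that $H^{\muW}$ is smooth off $\supp\muW$ and continuous up to $\hyper_a$.

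I would check the boundary behaviour of $f_1$ on $\partial Q$ and at infinity, and then apply the maximum principle to the harmonic function $f_1$ on the unbounded domain $Q$.

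\emph{The face $\{x_1=0\}$.} Here $f_1=0$, because $H^{\muW}$ is even in $x_1$ and so is $|x'|^2$.

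\emph{Infinity.} Since $\muW$ is a compactly supported probability measure, $\nabla H^{\muW}=O(|x|^{1-N})$. Hence $f_1\ge 2x_1-C|x|^{1-N}$, so $\liminf f_1\ge0$ at infinity within $Q$.

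\emph{The bottom face $\{x_N=a,\ x_1>0\}$.} Here $f_1(x',a)=\partial_{x_1}G(|x'|,0)$, and this splits into two cases.
\begin{itemize}
\item On $\{|x'|<\RW\}$, Lemma~\ref{l.hyperplane} gives that $G(\cdot,0)\equiv c'$, so $f_1=0$ there. This needs the tangential gradient to have a boundary trace; it does, since $H^{\muW}$ is Lipschitz up to $\hyper_a$, the density of $\nuW$ being bounded.
\item On $\{|x'|>\RW\}$, we need the one-dimensional statement $\partial_rG(r,0)\ge0$ for $r>\RW$, together with $x_1>0$.
\end{itemize}
Given these, the maximum principle (in its Phragm\'en--Lindel\"of form with $\liminf\ge0$ at infinity) yields $f_1\ge0$ in $Q$, which is Lemma~\ref{mono}.

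The main obstacle is the exterior inequality $\partial_rG(r,0)\ge0$ for $r>\RW$. The Euler--Lagrange inequality in Lemma~\ref{l.hyperplane} only gives $G(r,0)\ge c'$, not monotonicity. I would first try the layer-cake structure of $\nuW$. Restricted to $\hyper_a$, the kernel $\gN$ is the Riesz kernel whose equilibrium measure on a ball $B_\rho\subset\R^{N-1}$ is $e_\rho\propto(\rho^2-|x'|^2)_+^{-1/2}\,dx'$. Since
\[
\int_{|x'|}^{\RW}\rho\,(\rho^2-|x'|^2)^{-1/2}\,d\rho=\sqrt{\RW^2-|x'|^2},
\]
one can write $\nuW=\int_0^{\RW}w(\rho)\,e_\rho\,d\rho$ with an explicit $w\ge0$. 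Each potential $U_\rho:=\gN*(e_\rho\otimes\delta_a)$ is constant on $B_\rho$ and radially decreasing outside, with an explicit derivative. For $r>\RW$ this gives
\[
\partial_rG(r,0)=2r+2\int_0^{\RW}w(\rho)\,\partial_rU_\rho(r)\,d\rho .
\]
The task is then to show that this is nonnegative. I would do this by checking that it vanishes as $r\downarrow\RW$, which the $C^1$ matching at the free boundary should give, and that it is increasing in $r$. Alternatively, I would compute it directly through the hypergeometric formula for the potential of $\nuW$ from \cite{FM25}.

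If that fails, a fallback is to run the same harmonic argument one dimension lower. The restriction $G(\cdot,0)$ on $\{|x'|>\RW\}$ is governed by the half-Laplacian (Dirichlet-to-Neumann) structure of $H^{\muW}$ across $\hyper_a$. One would apply a maximum principle to the radial derivative of the extension in the lower halfspace $\{x_N<a\}$, using the reflection symmetry of $H^{\muW}$ about $\hyper_a$.
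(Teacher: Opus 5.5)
Your propagation step is correct and differs from the paper's. You apply the maximum principle to the Cartesian derivative $f_1=\partial_{x_1}(2H^{\muW}+|x'|^2)$ on the quarter space $\{x_1>0,\ x_N>a\}$, with $f_1=0$ on $\{x_1=0\}$ by reflection symmetry. The paper instead lifts $r^{-1}\partial_rG$ to a harmonic function of $(z,t)\in\R^{N+1}\times\R_+$ and uses that it tends to $2$ at infinity. Your version avoids the dimension lift and the estimate of $r^{-1}\partial_r(2H^{\muW})$ near the axis, and $\liminf f_1\ge0$ at infinity suffices.

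One justification is wrong. A bounded density does not make the single-layer potential Lipschitz, nor does it give a tangential gradient that is continuous up to $\hyper_a$: on $\hyper_a$ the tangential derivatives are Riesz transforms of the density, which can be unbounded for $L^\infty$ data. What you need, and what the paper uses, is that $\sqrt{(1-|x'|^2/\RW^2)_+}$ is H\"older continuous. This also gives the continuity of $\partial_rG(\cdot,0)$ at $r=\RW$ that you call $C^1$ matching.

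The genuine gap is the exterior inequality $\partial_rG(r,0)\ge0$ for $r>\RW$. This is the substance of the lemma (Step 1 of the paper's proof), and you leave it as a plan. In the layer-cake route you assert that $2r+2\int_0^{\RW}w(\rho)\,\partial_rU_\rho(r)\,d\rho$ is increasing in $r$, but give no reason. It does not follow from $U_\rho$ being constant on $B_\rho$ and decreasing outside; you need the exterior field explicitly.

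That field is available. Take $e_\rho=(\rho^2-|y'|^2)_+^{-1/2}\,dy'$ unnormalized. Then $w(\rho)\propto\rho$, and $\partial_rU_\rho(r)=-c_N\,\rho^{N-2}r^{2-N}(r^2-\rho^2)^{-1/2}$ for $r>\rho$; for $N=3$ this is the classical conducting disk. The substitution $\rho=rs$ then gives
\[
\frac{\partial_rG(r,0)}{2r}=1-C_N\int_0^{\RW/r}\frac{s^{N-1}}{\sqrt{1-s^2}}\,ds,\qquad r>\RW .
\]
This is increasing in $r$ and vanishes at $r=\RW$ by the continuity above, so it is nonnegative.

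The paper reaches the same conclusion through the representation $2H^{\muW}(x',a)=\frac{2}{N-2}|x'|^{2-N}F\bigl(\frac{N-2}{2},\frac12;\frac{N+2}{2};(\RW/|x'|)^2\bigr)$. It writes $\partial_rG(r,0)/(2r)=1-z^{N/2}M(z)$ with $z=(\RW/r)^2$ and $M$ a power series with nonnegative coefficients, and obtains $M(1)=1$ from Gauss's summation formula. Until one of these computations is carried out, your argument proves the lemma only conditionally on its hardest step.
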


\begin{proof}
	\textit{Step 1.} We show that $\p_rG(r,0)\geq 0$ for $r\geq 0$.
	
	For $r\leq \RW$ we have $G(r,0)= c'$ by Lemma \ref{l.hyperplane}. For $r>\RW$, we express $2H^{\muW}$ in terms of the hypergeometric series
	$$
	F(a,b;c;z) := \sum_{n=0}^\infty \frac{a(a+1)\cdots(a+n-1)}{c(c+1)\cdots(c+n-1)} \,\frac{b(b+1)\cdots(b+n-1)}{1\cdot 2\cdots n} \, z^n .
	$$
	Since $\muW = \nuW\otimes\delta_a$ and $|x-y|=|x'-y'|$ for $x,y\in\hyper_a$,
	$$
	2H^{\muW}(x',a) = \frac{2}{N-2}\cdot\frac{2\RW\,\Gamma(\frac{N+1}{2})}{\pi^{\frac{N+1}{2}}}\int_{|y'|\le\RW} |x'-y'|^{-(N-2)}\sqrt{1-\frac{|y'|^2}{\RW^2}}\,dy'.
	$$
	Rescaling $y'=\RW\eta$ gives
	$$
	2H^{\muW}(x',a) = \frac{2}{N-2}\cdot\frac{2\,\Gamma(\frac{N+1}{2})}{\pi^{\frac{N+1}{2}}}\,\RW^{2}\int_{|\eta|\le 1} \Bigl|\tfrac{x'}{\RW}-\eta\Bigr|^{-(N-2)}\sqrt{1-|\eta|^2}\,d\eta.
	$$
	The integral is computed using \cite[Lemma 9]{FM25} with $\gamma=-(N-2)$ and $d=N-1$, at the point $x'/\RW$ (of modulus $\geq 1$):
	$$
	\int_{|\eta|\le 1} \Bigl|\tfrac{x'}{\RW}-\eta\Bigr|^{-(N-2)}\sqrt{1-|\eta|^2}\,d\eta
	= \frac{\pi^{\frac N2}}{2\,\Gamma(\frac N2+1)}\,\RW^{\,N-2}\,|x'|^{-(N-2)}\,F\!\left(\tfrac{N-2}{2},\tfrac12;\tfrac{N+2}{2};(|x'|/\RW)^{-2}\right).
	$$
	Combining the two previous expressions, the constants simplify using $\RW^{N} = \sqrt\pi\,\Gamma(\frac N2+1)/\Gamma(\frac{N+1}{2})$, and we obtain
	$$
	2H^{\muW}(x',a) = \frac{2}{N-2}\,|x'|^{-(N-2)}\,F\!\left(\tfrac{N-2}{2},\tfrac12;\tfrac{N+2}{2};(|x'|/\RW)^{-2}\right).
	$$
    
    Directly from the definition of the hypergeometric series we find that
	$$
	\frac{d}{dz}F(a,b;c;z) = \frac{ab}{c}\, F(a+1,b+1;c+1;z) .
	$$

    It follows that
	$$\p_r G(r,0) = -2\,r^{-(N-1)}F(\tfrac{N-2}{2},\tfrac12;\tfrac{N+2}{2};z) - \frac{4z}{N-2}\,r^{-(N-1)}\,\tfrac{N-2}{2(N+2)}F(\tfrac{N}{2},\tfrac32;\tfrac{N+4}{2};z) + 2r.$$
	where $z=(r/\RW)^{-2}$. Dividing by $2r$ and using $r^{-N}=\RW^{-N}z^{\frac N2}$,
	\begin{equation}
		\label{eq:derivative}
		\frac{\p_r G(r,0)}{2r} = 1 - \RW^{-N}\,z^{\frac N2}\left( F\bigl(\tfrac{N-2}{2},\tfrac12;\tfrac{N+2}{2};z\bigr) + \frac{z}{N+2}\,F\bigl(\tfrac N2,\tfrac32;\tfrac{N+4}{2};z\bigr)\right) =: 1 - z^ {\frac N2}M(z).
	\end{equation}
	Since the two hypergeometric series have nonnegative coefficients, $M$ is a power series in $z$ with nonnegative coefficients, hence nondecreasing on $[0,1]$; in particular
	\begin{equation}
		\label{eq:minequality}
		M(z) \leq M(1), \qquad\text{for all}\ z\in[0,1].
	\end{equation}
	The value $F(a,b;c;1) = \frac{\Gamma(c)\Gamma(c-a-b)}{\Gamma(c-a)\Gamma(c-b)}$ (see \cite[(9.122.1)]{GR15}) gives
	$$
	F\bigl(\tfrac{N-2}{2},\tfrac12;\tfrac{N+2}{2};1\bigr) = \frac{\sqrt\pi\,\Gamma(\frac N2+1)}{2\,\Gamma(\frac{N+1}{2})}, \qquad
	F\bigl(\tfrac N2,\tfrac32;\tfrac{N+4}{2};1\bigr) = \frac{\sqrt\pi\,\Gamma(\frac N2+2)}{\Gamma(\frac{N+1}{2})},
	$$
	and hence
	\begin{equation}
		\label{eq:valueatone}
		M(1) = \RW^{-N}\left(\frac{\sqrt\pi\,\Gamma(\frac N2+1)}{2\,\Gamma(\frac{N+1}{2})} + \frac{\sqrt\pi\,\Gamma(\frac N2+2)}{(N+2)\,\Gamma(\frac{N+1}{2})}\right) = 1.
	\end{equation}
	Combining \eqref{eq:derivative}, \eqref{eq:minequality} and \eqref{eq:valueatone} yields $\p_r G(r,0)\geq 0$ for $r>\RW$.

    \medskip
    
    \emph{Step 2.} We now prove the assertion of the lemma. Recall $G(r,t) = 2H^{\muW}(x',a+t) + r^2$ with $|x'| = r$. Since $\muW$ is supported on $\hyper_a$, the potential $2H^{\muW}$ is harmonic, hence smooth, in $\{x_N>a\}$; and since $H^{\muW}$ is radial in $x'$, $\p_r(2H^{\muW})$ vanishes at $r=0$ and
	$$
	Q := r^{-1}\p_r G = 2 + r^{-1}\p_r(2H^{\muW})
	$$
	extends to a smooth function of $r$ on $[0,\infty)\times(0,\infty)$.

	In the variables $(r,t)$, $2H^{\muW}$ being harmonic reads $\bigl(\p_t^2 + \p_r^2 + \tfrac{N-2}{r}\p_r\bigr)(2H^{\muW}) = 0$. Applying the same operator to $G$ and using $\Delta_{x'}|x'|^2 = 2(N-1)$,
	$$
	\p_t^2 G + \p_r^2 G + \frac{N-2}{r}\,\p_r G = 2(N-1).
	$$
	Differentiating in $r$ and substituting $\p_rG = rQ$ we obtain
	$$
	\p_t^2 Q + \p_r^2 Q + \frac{N}{r}\,\p_r Q = 0
	\qquad\text{in}\ (r,t)\in(0,\infty)\times(0,\infty) .
	$$
	Since $\p_r^2 + \tfrac{N}{r}\p_r$ is the radial part of the Laplacian on $\R^{N+1}$, the function $\tilde Q$ on $\R^{N+1}\times\R_+$ defined by $\tilde Q(z,t):= Q(|z|,t)$ is harmonic. Moreover, since the density of $\nuW$ is H\"older continuous, the tangential gradient $\nabla_{x'}(2H^{\muW})$ extends continuously up to $\hyper_a$, so $\tilde Q$ is continuous up to $\R^{N+1}\times\{0\}$, where $\tilde Q \geq 0$ by Step 1.

	Furthermore, $\tilde Q(z,t) \to 2$ as $|z|+t\to\infty$, since the second term $r^{-1}\p_r(2H^{\muW})$ in $Q$ tends to $0$. Indeed, write $\hat x = (x', a+t)$, so that $|\hat x|\to\infty$ in $\{x_N> a\}$ along this limit. Since $\muW$ has compact support and bounded density, interior derivative estimates for harmonic functions give
	$$
	|\nabla(2H^{\muW})(\hat x)| \lesssim |\hat x|^{-(N-1)}, \qquad |D^2(2H^{\muW})(\hat x)| \lesssim |\hat x|^{-N}
	\qquad\text{as } |\hat x|\to\infty.
	$$
	For $r \geq 1$, the gradient bound gives 
    $|r^{-1}\p_r(2H^{\muW})| \lesssim |\hat x|^{-(N-1)}$, while for $r < 1$, since $\p_r(2H^{\muW})$ vanishes at $r=0$ by radial symmetry, the mean value theorem gives 
    $$|r^{-1}\p_r(2H^{\muW})| \leq \sup_{[0,r]} |\p_r^2(2H^{\muW})| \lesssim |\hat x|^{-N}.$$
    
    In both cases the second term tends to $0$. Thus, the maximum principle implies that $\tilde Q \geq 0$ on $\R^{N+1}\times\R_+$, as we wanted to prove.
    \end{proof}

    We will also need the following auxiliary computation concerning the function
    \[
    A(t):=-\partial_t H^{\muW}(0,a+t),\qquad t>0.
    \]

    \begin{lemma}\label{l.normal-field}
    The function \(A\) is strictly decreasing on \((0,\infty)\), and
    \[
    \lim_{t\downarrow0}A(t)=a_{\rm c},
    \qquad
    \lim_{t\to\infty}A(t)=0.
    \]
    \end{lemma}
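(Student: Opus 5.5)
The plan is to write $A$ as an explicit integral against the density of $\nuW$, and then rescale so that monotonicity in $t$ becomes monotonicity of that density.

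\emph{Step 1: integral formula.} Write $d\nuW(y')=f(|y'|)\,dy'$ with
$$f(\rho)=\frac{2\RW\,\Gamma(\frac{N+1}{2})}{\pi^{\frac{N+1}{2}}}\sqrt{(1-\rho^2/\RW^2)_+}.$$
Since $\muW=\nuW\otimes\delta_a$ and $N\ge3$, we have
$$H^{\muW}(0,a+t)=\frac{1}{N-2}\int (|y'|^2+t^2)^{-\frac{N-2}{2}}\,d\nuW(y').$$
For $t>0$ the integrand is smooth and $\nuW$ has compact support, so we may differentiate under the integral:
$$A(t)=\int \frac{t}{(|y'|^2+t^2)^{N/2}}\,f(|y'|)\,dy'.$$
This is, up to the constant $|\Sph^{N-1}|/2$, the Poisson extension of the density of $\nuW$ to the upper halfspace, evaluated above the origin.

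\emph{Step 2: rescaling.} Pass to polar coordinates in $\R^{N-1}$ and substitute $\rho=ts$. This gives
$$A(t)=|\Sph^{N-2}|\int_0^\infty \frac{s^{N-2}}{(1+s^2)^{N/2}}\,f(ts)\,ds.$$
The weight $s^{N-2}(1+s^2)^{-N/2}$ is positive and integrable on $(0,\infty)$, since it behaves like $s^{-2}$ at infinity. The function $f$ is nonincreasing on $[0,\infty)$ and strictly decreasing on $[0,\RW]$.

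Take $0<t_1<t_2$. Then $f(t_1s)\ge f(t_2 s)$ for every $s$, with strict inequality for $s\in(0,\RW/t_2)$, which is a set of positive measure. Hence $A(t_1)>A(t_2)$, so $A$ is strictly decreasing. This step is the one that needs care: differentiating the kernel $t(s+t^2)^{-N/2}$ directly in $t$ gives a factor $s-(N-1)t^2$, which has no fixed sign. The rescaling avoids this.

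\emph{Step 3: limits.} Use dominated convergence, with dominating function $\|f\|_\infty$ times the integrable weight.

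As $t\downarrow0$, we have $f(ts)\to f(0)$. Therefore
$$\lim_{t\downarrow0}A(t)=f(0)\,|\Sph^{N-2}|\int_0^\infty \frac{s^{N-2}}{(1+s^2)^{N/2}}\,ds=f(0)\,\frac{|\Sph^{N-1}|}{2}.$$
The value of the integral follows from the normalization of the Poisson kernel, or from a Beta-function computation.

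Substituting $f(0)=2\RW\Gamma(\frac{N+1}{2})/\pi^{\frac{N+1}{2}}$ and $|\Sph^{N-1}|=2\pi^{N/2}/\Gamma(\frac N2)$ gives
$$\lim_{t\downarrow0}A(t)=\frac{2\RW\,\Gamma(\frac{N+1}{2})}{\sqrt\pi\,\Gamma(\frac N2)}.$$
This equals $\ac$. Indeed, by \eqref{e.ac.RW} and \eqref{e.nuw}, $\ac=N\RW^{-(N-1)}=N\RW/\RW^{N}=N\RW\,\Gamma(\frac{N+1}{2})/(\sqrt\pi\,\Gamma(\frac N2+1))$, and $\Gamma(\frac N2+1)=\frac N2\Gamma(\frac N2)$.

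As $t\to\infty$, $f(ts)\to0$ for every $s>0$, so $A(t)\to0$. Alternatively, the crude bound $A(t)\le t^{-(N-1)}$ from Step 1, using that $\nuW$ is a probability measure, gives the same conclusion.
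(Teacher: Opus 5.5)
Your proof is correct, and for the monotonicity it takes a genuinely different route from the paper. The paper rescales by the fixed radius $\RW$ (with $u=t/\RW$), so the density stays supported on $[0,1]$ and the kernel carries the dependence on $u$. It then differentiates under the integral and meets exactly the sign-indefinite factor $s^2-(N-1)u^2$ you point out. It removes that factor with an integration-by-parts identity specific to the weight $(1-s^2)^{1/2}$, arriving at the explicit expression $B'(u)=-\int_0^1 s^N(1-s^2)^{-1/2}(s^2+u^2)^{-N/2}\,ds<0$.

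You instead rescale by $t$ itself. This freezes the kernel (the Poisson kernel at height one) and moves all the $t$-dependence into the dilated density $f(ts)$. Strict monotonicity then follows from the radial monotonicity of $f$ alone, with no differentiation and no identity. Your argument is shorter and more general: it shows that the normal field above the center decreases strictly for any nonzero, radially nonincreasing, compactly supported density. The paper's computation additionally yields a closed formula for the derivative, which is not needed in the applications.

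For the limits the two proofs agree in substance: both use dominated convergence, and you identify the constant through the normalization of the Poisson kernel instead of a Beta integral.

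One small point: the lemma is also invoked for $N=2$ in the proof of Theorem~\ref{t.classification}, whereas your Step 1 writes the kernel only for $N\ge3$. The logarithmic kernel gives the same formula for $A$, and everything after Step 1 is dimension-independent, so this is only a matter of stating it.
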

    
    \begin{proof}
    Since
    \[
    d\nuW(y')
    =
    \frac{2\RW\,\Gamma\!\left(\frac{N+1}{2}\right)}{\pi^{\frac{N+1}{2}}}
    \sqrt{\left(1-\frac{|y'|^2}{\RW^2}\right)_+}\,dy',
    \]
    we have, for all \(N\ge2\),
    \[
    A(t)
    =
    t\int_{\mathbb R^{N-1}}\frac{d\nuW(y')}{(|y'|^2+t^2)^{N/2}}.
    \]
    Passing to polar coordinates in \(\mathbb R^{N-1}\), using
    \[
    |\mathbb S^{N-2}|
    =
    \frac{2\pi^{\frac{N-1}{2}}}{\Gamma(\frac{N-1}{2})},
    \qquad
    \Gamma\!\left(\frac{N+1}{2}\right)
    =
    \frac{N-1}{2}\Gamma\!\left(\frac{N-1}{2}\right),
    \]
    gives
    \[
    A(t)
    =
    \frac{2(N-1)\RW}{\pi}\,
    u\int_0^1
    \frac{s^{N-2}(1-s^2)^{1/2}}{(s^2+u^2)^{N/2}}\,ds,
    \qquad u:=\frac{t}{\RW}.
    \]
    Set
    \[
    B(u):=
    u\int_0^1
    \frac{s^{N-2}(1-s^2)^{1/2}}{(s^2+u^2)^{N/2}}\,ds.
    \]
    We show \(B'(u)<0\). Differentiating under the integral sign,
    \[
    B'(u)
    =
    \int_0^1
    \frac{s^{N-2}(1-s^2)^{1/2}\bigl(s^2-(N-1)u^2\bigr)}
         {(s^2+u^2)^{\frac N2+1}}\,ds .
    \]
    Now observe that
    \[
    \frac{d}{ds}
    \left[
    s^{N-1}(1-s^2)^{1/2}(s^2+u^2)^{-N/2}
    \right]
    =
    \frac{s^{N-2}\bigl((N-1)u^2-(Nu^2+1)s^2\bigr)}
         {(1-s^2)^{1/2}(s^2+u^2)^{\frac N2+1}} .
    \]
    The boundary terms at \(s=0\) and \(s=1\) vanish. Hence the integral of
    the right side is zero. Rewriting the numerator in \(B'(u)\) as
    \[
    (1-s^2)\bigl(s^2-(N-1)u^2\bigr)
    =
    -\bigl((N-1)u^2-(Nu^2+1)s^2\bigr)-s^2(s^2+u^2),
    \]
    we obtain
    \[
    B'(u)
    =
    -\int_0^1
    \frac{s^N}{(1-s^2)^{1/2}(s^2+u^2)^{N/2}}\,ds
    <0.
    \]
    Thus \(A\) is strictly decreasing.
    
    It remains to compute the endpoint values. As \(u\downarrow0\),
    \[
    u\int_0^1
    \frac{s^{N-2}(1-s^2)^{1/2}}{(s^2+u^2)^{N/2}}\,ds
    \longrightarrow
    \int_0^\infty \frac{\rho^{N-2}}{(1+\rho^2)^{N/2}}\,d\rho
    =
    \frac{\sqrt\pi\,\Gamma(\frac{N-1}{2})}{2\Gamma(\frac N2)},
    \]
    where the last equality comes from a Beta function identity. Therefore
    \[
    \lim_{t\downarrow0}A(t)
    =
    \frac{(N-1)\RW}{\sqrt\pi}
    \frac{\Gamma(\frac{N-1}{2})}{\Gamma(\frac N2)}.
    \]
    Using
    \[
    \RW^N
    =
    \frac{\sqrt\pi\,\Gamma(\frac N2+1)}{\Gamma(\frac{N+1}{2})}
    =
    \frac{N\sqrt\pi\,\Gamma(\frac N2)}
    {(N-1)\Gamma(\frac{N-1}{2})},
    \]
    this becomes
    \[
    \lim_{t\downarrow0}A(t)=\frac{N}{\RW^{N-1}}=a_{\rm c}.
    \]
    Finally, \(A(t)\to0\) as \(t\to\infty\), for instance by dominated
    convergence in the original integral.
    \end{proof}

    With these lemmas in hand, we can finally prove Proposition \ref{p.threshold}.
    \begin{proof}[Proof of Proposition \ref{p.threshold}]
        By Lemma \ref{l.char} (whose proof is based on the strict convexity of $\mu\mapsto\energy[\mu]$), it suffices to show that there is a constant $c\in\R$ such that
    \begin{equation}
    	\label{eq:elgoal}
    	\begin{cases}
    		2H^{\muW}(x) + |x|^2 = c & \text{on}\ \{ |x'|\leq \RW\} \times \{ x_N = a\} \,, \\
    		2H^{\muW}(x) + |x|^2 \geq c & \text{on}\ \{x_N\geq a\} \,.
    	\end{cases}
    \end{equation}
    
    We will show \eqref{eq:elgoal} with $c=c' + a^2$ with $c'$ as in Lemma \ref{l.hyperplane}. On $\{x_N = a\}$, \eqref{eq:elgoal} follows directly from \eqref{eq:eln-1}.
    
    Now we focus on $\{x_N > a\}$. Since $H^{\muW}$ depends on $x'$ only through $r = |x'|$, we may write
    $$\Phi(r,t) := 2H^{\muW}(x) + |x|^2,$$
    with $x = (x',a+t)$ and $r = |x'|$.

    By Lemma \ref{l.normal-field} and using $a \geq a_c$ and $t\geq 0$,
    $$\p_t\Phi(0,t) = -2A(t)+2(a+t) \geq -2A(t) + 2a_c \geq 0,$$
    and then $\Phi(0,t) \geq \Phi(0,0) = c$ for all $t \geq 0$.
    
    Moreover,
    $$\Phi(r,t) = G(r,t) + (a+t)^2,$$
    where $G$ is as in Lemma \ref{mono}, and then $\partial_r \Phi(r,t) = \partial_r G(r,t) \geq 0$. Hence,
    $$\Phi(r,a+t) \geq \Phi(0,a+t) \geq \Phi(0,0) = c.$$
    This completes the proof of the proposition.
    \end{proof}

\section{Classification}\label{s.classification}

    In this section we prove Theorem \ref{t.classification}. First, we see that for all $a > 0$, there must be some mass supported in $\hyper_a$.
	\begin{proposition}\label{p.singular}
		For every $N\ge2$ and for every $a>0$, if $\lambda(a)>0$ then
		\begin{equation*}
			a\ \le\ N\lambda(a)^{-(N-1)/N}\bigl(1-\lambda(a)\bigr).
		\end{equation*}
		In particular $\lambda(a)<1$, which together with Theorem~\ref{t.structure}(b) gives $\muhat_{S,\kern0.2pt a}\ne0$.
	\end{proposition}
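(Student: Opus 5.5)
The plan is a force-balance (virial-type) argument on the bulk $\bulk_a$, assuming $\lambda:=\lambda(a)>0$. Since $\bulk_a$ is open and $2H^{\muhat_a}+|x|^2=c_a$ on $\supp\muhat_a\supset\bulk_a$ by Theorem~\ref{t.structure}(a), and $H^{\muhat_a}\in C^{1,1}_{\rm loc}(\R^N\setminus\hyper_a)$, differentiation gives $\nabla H^{\muhat_a}(x)=-x$ on $\bulk_a$. I would integrate the $N$-th component of this identity against the bulk density $\frac{1}{|B_1|}\chi_{\bulk_a}$. Split $H^{\muhat_a}=H^{\rm b}+H^{S}$, where $H^{\rm b}$ is the potential of $\frac{1}{|B_1|}\chi_{\bulk_a}dx$ and $H^{S}$ that of $\muhat_{S,\kern0.2pt a}$. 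This yields
\[
\frac{1}{|B_1|}\int_{\bulk_a}x_N\,dx=-\frac{1}{|B_1|}\int_{\bulk_a}\p_N H^{\rm b}\,dx-\frac{1}{|B_1|}\int_{\bulk_a}\p_N H^{S}\,dx .
\]
The self-force term vanishes. Indeed, $-\nabla H^{\rm b}(x)=\frac{1}{|B_1|}\int_{\bulk_a}(x-y)|x-y|^{-N}dy$, the kernel $|x-y|^{1-N}$ is integrable on the bounded set $\bulk_a\times\bulk_a$ (Lemma~\ref{l.compact}(i)), and the integrand is antisymmetric, so Fubini gives zero. The left side is strictly larger than $a\lambda$, because $\bulk_a\subset\{x_N>a\}$ and $|\bulk_a|=\lambda|B_1|$. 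Hence
\[
a\lambda<\frac{1}{|B_1|}\int_{\hyper_a}\int_{\bulk_a}\frac{x_N-a}{|x-y|^{N}}\,dx\,d\muhat_{S,\kern0.2pt a}(y),
\]
where I used $-\p_N H^S(x)=\int (x_N-a)|x-y|^{-N}d\muhat_{S,\kern0.2pt a}(y)$ and Tonelli, since the integrand is nonnegative.

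It then suffices to prove that for every $y\in\hyper_a$ and every measurable $E\subset\{x_N>a\}$ with $|E|=\lambda|B_1|$,
\[
\frac{1}{|B_1|}\int_E\frac{x_N-a}{|x-y|^{N}}\,dx\le N\lambda^{1/N}.
\]
Combined with $\muhat_{S,\kern0.2pt a}(\hyper_a)=1-\lambda$, this gives $a\lambda\le N\lambda^{1/N}(1-\lambda)$, which is the claim. The addendum then follows: $a>0$ forces $1-\lambda>0$, so $\muhat_{S,\kern0.2pt a}\neq0$ by Theorem~\ref{t.structure}(b).

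The geometric inequality is the step I expect to be the main obstacle. Translate $y$ to the origin and write the kernel in polar coordinates as $\cos\theta\,\rho^{1-N}$, with $\theta$ the angle to $e_N$. By the bathtub principle, the maximizing $E$ is a superlevel set $\{\cos\theta>s\rho^{N-1}\}$. Each ray in direction $\omega$ then contributes the segment $\rho<(\cos\theta/s)^{1/(N-1)}$. On that ray, the contribution to the integral is $\cos\theta\cdot\rho_{\max}(\omega)$, and the contribution to the volume is $\rho_{\max}(\omega)^N/N$.

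Both quantities reduce to integrals of powers of $\cos\theta$ over the upper hemisphere. So the optimal constant is an explicit ratio of Beta functions times $\lambda^{1/N}$, and I must check that this ratio is at most $N|B_1|$. If the sharp bathtub constant turns out awkward, I would first try the cruder bound using Jensen/Hölder on each ray. Maximize $\int_{\Sph^{N-1}_+}\cos\theta\,\rho_{\max}\,d\omega$ subject to $\int\rho_{\max}^N/N\,d\omega=\lambda|B_1|$, and use $\int_{\Sph^{N-1}_+}\cos\theta\,d\omega=|B^{N-1}_1|$. Then check that the resulting Gamma-function constant fits under $N|B_1|$. Whether the stated constant $N$ is exactly what this computation produces, or requires the sharp version, is the point I would verify carefully.
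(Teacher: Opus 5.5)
Your argument is correct and is essentially the paper's proof. You use the force balance $\nabla H^{\muhat_a}=-x$ on $\bulk_a$, integrate it over the bulk, kill the bulk--bulk term by antisymmetry, and control the wall term with the bathtub principle. You project onto $e_N$ and use $\bar x_N>a$, while the paper keeps the full vector and uses $|\bar x|\ge a$; this difference is immaterial.

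The step you flag as the main obstacle closes in one line, as in the paper. For $x\in E$ and $y\in\hyper_a$ we have $0<x_N-a\le|x-y|$, so the integrand is at most $|x-y|^{1-N}$. This function is symmetric decreasing about $y$, so the bathtub principle gives
\[
\frac{1}{|B_1|}\int_E\frac{x_N-a}{|x-y|^{N}}\,dx\le\frac{1}{|B_1|}\int_{B_{\lambda^{1/N}}}\frac{dx}{|x|^{N-1}}=\frac{|\Sph^{N-1}|\,\lambda^{1/N}}{|B_1|}=N\lambda^{1/N}.
\]
This is exactly where the constant $N$ comes from. Your sharp polar-coordinate optimization is dominated by this crude bound, so it can only give a smaller constant; it is the improvement mentioned in the remark after the paper's proof. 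No Beta-function comparison is needed to reach the stated constant.
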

    
	\begin{proof}
        For all $x\in\bulk_a$, since $\bulk_a$ is open, we can differentiate Theorem \ref{t.structure}(a) to obtain
        \begin{equation}\label{e.fieldid}
            x = -\nabla\tfrac12(c-|x|^2) = -\nabla H^{\muhat_a} = -\nabla\gN * \muhat_a = \int \frac{x-y}{|x-y|^N}\,d\muhat_a(y).
        \end{equation}
    
		Now assume $\lambda:=\lambda(a)>0$. Integrate \eqref{e.fieldid} over $\bulk_a$ against the Lebesgue
		measure and divide by~$|B_1|$:
		\[
		\frac{1}{|B_1|}\int_{\bulk_a}\!\Bigl(\int\frac{x-y}{|x-y|^N}\,d\muhat_a(y)\Bigr)dx
		=\frac{1}{|B_1|}\int_{\bulk_a}x\,dx=\lambda\bar x,
		\qquad
		\bar x:=\frac{1}{|\bulk_a|}\int_{\bulk_a}x\,dx .
		\]
		Split $d\muhat_a=\tfrac1{|B_1|}\chi_{\bulk_a}\,dx+d\nu_a$ in the inner integral, and note that $\nu_a:=\muhat_a\!\restriction_{\R^N\setminus\bulk_a}$ has total mass $1-\lambda$. Now, by Fubini, and using that the bulk-bulk interaction term is antisymmetric,
        \begin{align*}
            \lambda\bar x &= \frac{1}{|B_1|}\int_{\bulk_a}\!\Bigl(\int_{\bulk_a}\frac{x-y}{|x-y|^N}\,d\muhat_a(y)\Bigr)dx + \frac{1}{|B_1|}\int_{\bulk_a}\!\Bigl(\int\frac{x-y}{|x-y|^N}\,d\nu_a(y)\Bigr)dx\\
            &= \frac{1}{|B_1|^2}\int_{\bulk_a}\int_{\bulk_a}\frac{x-y}{|x-y|^N}\,dxdy + \int\Bigl(\frac{1}{|B_1|}\int_{\bulk_a}\frac{x-y}{|x-y|^N}\,dx\Bigr)d\nu_a(y)\\
            &= \int\Bigl(\frac{1}{|B_1|}\int_{\bulk_a}\frac{x-y}{|x-y|^N}\,dx\Bigr)d\nu_a(y).
        \end{align*}

        Therefore, since $|\bulk_a| = \lambda|B_1| = |B_{\lambda^{1/N}}|$, by the bathtub principle \cite[Theorem 1.14]{LL01},
        $$\lambda|\bar x| \leq \frac{1}{|B_1|}\int\Bigl(\int_{\bulk_a}\frac{dx}{|x-y|^{N-1}}\Bigr)d\nu_a(y) \leq \frac{1}{|B_1|}\int_{B_{\lambda^{1/N}}}\frac{dx}{|x|^{N-1}}|\nu_a| = N\lambda^{1/N}(1-\lambda).$$
        On the other hand, since $\muhat_a$ is supported in $\half_a$, $|\bar x| \geq a$, and then the conclusion follows.
	\end{proof}

    \begin{remark}
        The constant $N$ in the proposition can be improved. Indeed, to bound
        $$
        \int_{\bulk_a}\frac{(\bar x/|\bar x|)\cdot(x-y)}{|x-y|^{N}}\,dx
        $$
        from above, in the previous proof we used $(\bar x/|\bar x|)\cdot(x-y) \leq |x-y|$ and then we applied a rearrangement inequality. Instead we can use the explicit solution of the minimization problem
        $$
        \sup_{|E|=V} \int_E \frac{z\cdot e}{|z|^N}\,dz
        $$
        where $V>0$ and $e\in\Sph^{N-1}$ are given. For a similar computation, see \cite[Lemma 11.5]{FH25}.
    \end{remark}
    
    Finally, we prove our classification theorem.
	\begin{proof}[Proof of Theorem~\ref{t.classification}]
		First, if $a \leq -1$, then $\mustar \in \Cset_a$, and since $\mustar$ is the unconstrained global minimizer in $\Prob(\R^N)$, it is also the global minimizer in the smaller set $\Cset_a$.
		
		Then, if $a \in (-1,\ac)$, $\muhat_a$ is not purely singular. This is already shown in \cite{BFMS26}, but it can also be derived independently from what we have already shown. Indeed, suppose $\muhat_a = \muhat_{S,\kern0.2pt a}$. Then $\muhat_a$ is supported on $\hyper_a$, and then by Lemma \ref{l.hyperplane} $\muhat_a = \muW$. 
        
        Writing
        $\Phi(r,t) := 2H^{\muW}(x) + |x|^2$ with $x = (x',a+t)$, $r = |x'|$, as in the proof of Proposition \ref{p.threshold}, Lemma \ref{l.char} gives $\Phi \geq c_a$ on $\half_a$ with equality on
        $\supp\muW$. As $(0,a)\in\supp\muW$, $\Phi$ attains its minimum at $(0,0)$, but Lemma \ref{l.normal-field} gives
        $$\p_t\Phi(0,0^+) = -2A(0^+) + 2a = -2a_c + 2a < 0,$$
        a contradiction.

        Now we distinguish two cases: if $a \in (-1,0]$, then $\muhat_{S,\kern0.2pt a} \neq 0$ by Theorem \ref{t.structure}(d), and if $a \in (0,\ac)$, then $\muhat_{S,\kern0.2pt a} \neq 0$ by Proposition~\ref{p.singular}.

        Finally, if $a \geq \ac$, then $d\muhat_a = d\nuW(x')\otimes\delta_a(x_N)$ by \cite{DKMSS17} in one dimension, by \cite[Proposition 3.1]{ASZ14} in two dimensions, and by Proposition \ref{p.threshold} for $N \geq 3$.
	\end{proof}

\end{document}